\newtheorem{theorem}{Theorem}
\theoremstyle{plain}
\newtheorem{corollary}{Corollary}
\newtheorem{proposition}{Proposition}
\newtheorem{remark}{Remark}
\numberwithin{equation}{section}
\DeclareMathOperator{\id}{Id}
\def\co{\colon\thinspace} 
\def\bbC{{\mathbb C}}
\def\bbZ{{\mathbb Z}}
\def\calA{{\mathcal A}}
\def\calC{{\mathcal C}}
\def\H{{\mathcal H_{a,h}^*}}
\def\calL{{\mathcal L}}
\begin{document}

\title{On the quantum filtration of the universal $sl(2)$ foam cohomology}
\author{Carmen Caprau}
\address{Department of Mathematics, California State University, Fresno, CA 93740 USA}
\email{ccaprau@csufresno.edu}

\date{}
\subjclass[2000]{57M27}
\keywords{Cobordisms, Frobenius Algebras, Link Cohomology, TQFTs}

\begin{abstract}
We investigate the filtered theory corresponding to the universal $sl(2)$ foam cohomology $\H$ for links, where $a, h \in \bbC$. We show that there is a spectral sequence converging to $\H$ which is invariant under the Reidemeister moves, and whose $E_1$ term is isomorphic to Khovanov homology. This spectral sequence can be used to obtain from the foam perspective an analogue of the Rasmussen invariant and a lower bound for the slice genus of a knot.
 \end{abstract}

\maketitle 
\section{Introduction}

The author constructed in \cite{CC1} an invariant for oriented links in $\mathbb{S}^3,$ which is a $\mathbb{Z} \oplus \mathbb{Z}$-graded module $\mathcal{H}^{*,*}$ over $R = \bbZ[i, a,h],$ where $i^2 = -1$ and $a$ and $h$ are formal parameters. One of the $\mathbb{Z}$ gradings is the \textit{cohomological grading} and the other is the \textit{quantum grading} given by degrees of polynomials. The construction was done via \textit{webs} and \textit{foams} (disoriented cobordisms) modulo local relations, along the lines of Bar-Natan's work~\cite{BN1}. The advantage of this approach is that it yields a theory which is properly functorial with respect to link cobordisms with no sign indeterminacy, and a clearer geometric picture of the construction.
  
The ring $R$ is graded by $\deg(a) = 4, \deg(h) = 2,$ and $\deg(1) = \deg(i) = 0.$ Setting $a, h \in \bbC$ such that $h^2 + 4a \neq 0$, and working over $\bbC$, one loses the bi-grading and obtains a $\bbZ$-graded theory with a filtration in place of the quantum grading. The purpose of the current paper is to investigate the filtered version of the universal $sl(2)$ foam cohomology $\H$ for links (the subscript in $\H$ refers to the fact that $a$ and $h$ are not parameters anymore but fixed complex numbers), motivated by earlier works which show that interesting results can be obtained by studying filtered link theories. The first works in this direction are those of Lee~\cite{L} and Rasmussen~\cite{R} followed by, for example,~\cite{Lo, T, W}. Our approach to the filtered theory is somewhat similar to that in~\cite{R, W}.

We assume familiarity with the construction in~\cite{CC1}, but to establish some notations we briefly recall a few concepts and statements about the universal $sl(2)$ foam cohomology.

We denote by $\textit{Foams}$ the category whose objects are webs and whose morphisms are $R$-linear combinations of foams, and we denote by $\textit{Foams}_{/\ell}$ the quotient category of $\textit{Foams}$ by the relations $\ell,$ that is, we mod out the morphisms of the category \textit{Foams} by the local relations $\ell$--these are the ``generalized" Bar-Natan relations enhanced by additional relations involving the disoriented 2-sphere. We draw foams with their source at the bottom and target at the top. The functor $ \textbf{Foams}_{/\ell} \to \textbf{R-Mod}$ used in the foam theory, taking us from the geometric picture to the algebraic picture, is strongly connected to the \textit{universal Frobenius system} of rank two (universal in the sense of Khovanov's work~\cite{Kh2}) defined on the graded $R$-module $\calA = R[X]/(X^2 - hX -a)$. With respect to the generators $1$ and $X$, with $\deg(1) = -1$ and $\deg(X) = 1$, the comultiplication $\Delta$ and multiplication $m$ corresponding to the algebra $\calA$ are homogeneous maps and are given by
\begin{eqnarray*}
&&\Delta(1)= 1\otimes X + X \otimes 1 - h 1 \otimes 1,\quad  \Delta(X) = X \otimes X + a 1 \otimes 1\\
&&m(1\otimes 1) = 1, m(1\otimes X) = m(X \otimes 1) = X, m(X\otimes X) = hX + a
\end{eqnarray*} 
and the counit and unit by $\epsilon(1) = 0, \epsilon(X) = 1, \iota(1) = 1.$ The TQFT corresponding to $\mathcal{A}$ factors through the quotient category of \textit{Foams} by the relations $\ell.$

The complex (living in the algebraic world) associated to an oriented link diagram $L$ is isomorphic to the complex whose objects are tensor powers of $\calA$ and whose differential is built up from the maps $\Delta$ and $m$ sprinkled by powers of $i.$ We denote this complex by $\calC^{*,*}(L)$ and its differential by $d.$ 

In this paper we let $a$ and $h$ be complex numbers such that $X^2 - hX -a = (X- \alpha) (X -\beta),$ where $\alpha, \beta \in \mathbb{C}, \alpha \neq \beta$ and we work over $\bbC.$  While the differential $d$ does not respect the quantum grading anymore, it cannot increase this grading (it is easy to see that if $v$ is a homogeneous element, then the quantum grading of every monomial in $\Delta(v)$ or $m(v)$ is less than or equal to that of $v$), giving rise to a $\bbZ$-graded theory with a filtration in place of the quantum grading. We denote the resulting complex by $\calC^*_{a,h}(L)$ and we refer to the corresponding link cohomology as the \textit{filtered foam cohomology}, denoted by $\H(L).$ 

A \textit{state} $\phi$ of $L$ is a labeling of its arcs by $\alpha$ and $\beta,$ and a state is called \textit{canonical} if the arcs belonging to the same component of $L$ have the same label. The author showed in~\cite{CC1} that the following result holds for the  \textit{filtered foam theory}.

\begin{theorem} \cite[Theorem 4]{CC1}
For any $n$-component link $L,$ and $a, h \in \bbC$ such that $h^2 + 4a \neq 0,$ the dimension of the cohomology group $\H(L)$ equals $2^n.$
\end{theorem}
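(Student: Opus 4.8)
The plan is to mimic Lee's change-of-basis argument, which works because over $\bbC$ the quadratic $X^2-hX-a$ factors as $(X-\alpha)(X-\beta)$ with $\alpha\neq\beta$. First I would diagonalize the Frobenius algebra $\calA$: set $\mathbf{a}=\frac{X-\beta}{\alpha-\beta}$ and $\mathbf{b}=\frac{X-\alpha}{\beta-\alpha}$, so that $\{\mathbf{a},\mathbf{b}\}$ is a new $\bbC$-basis of $\calA$ with $\mathbf{a}^2=\mathbf{a}$, $\mathbf{b}^2=\mathbf{b}$, $\mathbf{a}\mathbf{b}=0$, $\mathbf{a}+\mathbf{b}=1$. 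In these idempotents the multiplication becomes that of $\bbC\times\bbC$, and one computes $\Delta(\mathbf{a})=(\alpha-\beta)\,\mathbf{a}\otimes\mathbf{a}$, $\Delta(\mathbf{b})=(\beta-\alpha)\,\mathbf{b}\otimes\mathbf{b}$, while the counit satisfies $\epsilon(\mathbf{a})=\frac{1}{\alpha-\beta}$, $\epsilon(\mathbf{b})=\frac{1}{\beta-\alpha}$. Thus, up to the nonzero scalars $\pm(\alpha-\beta)$, the foam TQFT splits as a direct sum of two copies of the trivial (rank-one) TQFT, one for each idempotent.

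Next I would transport this to the complex $\calC^*_{a,h}(L)$. Since the differential $d$ of $\calC^{*,*}(L)$ is built entirely from $m$, $\Delta$, $\iota$, $\epsilon$ (decorated by powers of $i$), rewriting everything in the idempotent basis shows that $\calC^*_{a,h}(L)$, as a complex of $\bbC$-vector spaces, is isomorphic to a complex whose generators are the \emph{states} $\phi$ of $L$ (labelings of arcs by $\alpha,\beta$, i.e.\ by the idempotents $\mathbf{a},\mathbf{b}$): each circle in each resolution carries either $\mathbf{a}$ or $\mathbf{b}$. The key point is that because $\mathbf{a}\mathbf{b}=0$ and $\Delta$ of an idempotent is a multiple of that idempotent tensored with itself, the differential cannot mix an $\mathbf{a}$-labeled circle with a $\mathbf{b}$-labeled circle along a saddle unless the two circles carry the same idempotent — and when two circles with different labels merge, the result is $0$. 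This is exactly the situation Lee analyzed: the homology is generated by the canonical states, one for each of the $2^n$ ways of assigning $\alpha$ or $\beta$ to each of the $n$ link components.

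Concretely, I would carry out the following steps in order. (1) Show the isomorphism of the global complex with the ``states'' complex via the local idempotent change of basis, using functoriality of the TQFT from $\textit{Foams}_{/\ell}$. (2) Observe that the differential splits according to which components merge, and reduce the computation to a single connected link diagram, where by the merge-kills-mixed-labels phenomenon the surviving generators in homology are the two canonical all-$\mathbf{a}$ and all-$\mathbf{b}$ states — this is the analogue of Lee's computation for a knot, and can be proved by induction on the number of crossings or by directly identifying the complex with (a deformation of) the Khovanov complex whose homology Lee computed. (3) Conclude by a Künneth-type argument: for an $n$-component link, the canonical states are parametrized by $\{\alpha,\beta\}^n$, giving $\dim_{\bbC}\H(L)=2^n$. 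The main obstacle is Step (2): one must verify carefully that the spectral sequence / direct computation really does collapse to the canonical generators, i.e.\ that no canonical state is a boundary and that nothing else survives. Over $\bbC$ with $\alpha\neq\beta$ this follows because the idempotent-decomposed theory is, up to invertible scalars, literally Lee's theory with her quadratic replaced by ours; the nondegeneracy hypothesis $h^2+4a\neq0$ is precisely what guarantees $\alpha\neq\beta$ and hence that the two idempotents are linearly independent, which is what makes the argument go through.
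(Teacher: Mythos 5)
Your approach---diagonalizing the Frobenius algebra $\calA$ via the idempotents $\mathbf{a}=\frac{X-\beta}{\alpha-\beta}$, $\mathbf{b}=\frac{X-\alpha}{\beta-\alpha}$ and running Lee's analysis in that basis---is the right one and matches the argument in \cite{CC1} and the sketch the paper gives right after the theorem statement (the paper introduces exactly these idempotents in Section 3 as $z_1$, $z_2$ and explicitly flags the parallel with Lee). Your computation of $\Delta$ in the idempotent basis is also correct: $\Delta(\mathbf{a})=(\alpha-\beta)\,\mathbf{a}\otimes\mathbf{a}$, as the Frobenius identity $(\epsilon\otimes\mathrm{id})\Delta=\mathrm{id}$ with $\epsilon(\mathbf{a})=\frac{1}{\alpha-\beta}$ confirms; the displayed formula $\Delta(z_2)=\frac{1}{\alpha-\beta}\,z_2\otimes z_2$ in Section 3 of the paper contains a typo, and the coefficient should be $\alpha-\beta$ as you have it.

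There is, however, a genuine gap in your Steps (2)--(3). You propose to reduce to a ``single connected link diagram,'' show that the only surviving generators are the two constant ones (all-$\mathbf{a}$ and all-$\mathbf{b}$), and then apply a K\"unneth argument for general links. This fails for non-split links: a connected diagram can carry $n>1$ components. The Hopf link is the simplest counterexample---its standard diagram is connected, it has two components, and there are $2^2=4$ canonical states (two of them sitting at the oriented resolution in homological degree $0$ and two at the disoriented resolution in degree $2$), hence a four-dimensional $\H$. So the surviving generators for a connected diagram are $2^n$, not $2$, and only two of them are constant. Furthermore, the chain complex of a non-split link does not factor as a tensor product of complexes for the individual components, so there is no K\"unneth reduction to the knot case. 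The correct completion is the alternative you mention in passing: after the idempotent change of basis, identify $\calC^*_{a,h}(L)$ (up to the invertible scalars $\pm(\alpha-\beta)$ and the powers of $i$ decorating the foam differential, none of which affect kernels or images) with Lee's deformed Khovanov complex of the mirror, for which Lee proves directly that the homology has rank $2^n$ for any $n$-component link---no reduction to knots and no K\"unneth is needed. With that substitution the argument closes.
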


Indeed, there is a one-to-one correspondence between the set of canonical states of $L$ and the set of all generators $h_{\phi}$ of $\H(L),$ which can be described as follows: each canonical state $\phi$ defines precisely one resolution $\Gamma_{\phi},$ which we refer to as the \textit{canonical resolution} of $L,$ obtained by resolving to the disoriented resolution all crossings at which the $\phi$-values of the two strands are different, and resolving to the oriented resolution all crossings at which the $\phi$-values of the two strands are equal. Denote by $h_{\phi}$ the induced generator in the cohomology group $\H(L)$ by the canonical resolution $\Gamma_{\phi}.$ All $h_{\phi}$ generate $\H(L).$ The reader may have observed the similarities with Lee's work in~\cite{L}, where she exhibits a bijection between the set of possible orientations of $L$ and the set of generators of her homology theory.
 
To this end, let $CKh^{*,*}(L)$ be the Khovanov complex~\cite{ BN0,Kh1} over $\bbC$ and denote by $Kh_{\bbC}^{*,*}(L)$ the resulting $\bbC$-Khovanov homology.

\begin{theorem}\label{main thm}
For each $a, h \in \bbC$ such that $h^2 + 4a \neq 0,$ there is a spectral sequence with $E_1 \cong Kh_{\bbC}^{*,*}(L^!)$ and converging to $\H(L)$ (where $L^!$ denotes the mirror image of $L$). All pages $E_j, j \geq 1,$ of this spectral sequence are invariants of the link $L.$
\end{theorem}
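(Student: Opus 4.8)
The plan is to realize $\calC^*_{a,h}(L)$ as a filtered cochain complex and run the associated spectral sequence. First I would introduce the \emph{quantum filtration}: for $p\in\bbZ$ let $\calF_p\calC^*_{a,h}(L)$ be the $\bbC$-span of the homogeneous basis monomials (built from $1,X$ in each tensor factor over the resolutions of $L$, with the usual shifts) of quantum grading $\le p$. As recalled in the excerpt, the differential $d$ never raises the quantum grading, so $d(\calF_p)\subseteq\calF_p$ and each $\calF_p$ is a subcomplex; this increasing filtration is exhaustive and, since each $\calC^i_{a,h}(L)$ is finite dimensional, bounded in every cohomological degree. Hence the spectral sequence $(E_r,d_r)$ of this filtered complex converges, with $E_\infty^{p,*}\cong\mathrm{gr}_p\,\H(L)$ for the induced filtration on cohomology; in this sense it converges to $\H(L)$, and $\dim\H(L)=2^n$ by \cite[Theorem 4]{CC1}.

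Next I would identify the first page. On $E_0=\bigoplus_p\calF_p/\calF_{p-1}$ the induced differential $d_0$ is exactly the quantum-grading-preserving part of $d$, i.e.\ the differential assembled from the leading parts $\Delta_0,m_0$ of $\Delta,m$ obtained by discarding the $h$- and $a$-terms: $\Delta_0(1)=1\otimes X+X\otimes 1$, $\Delta_0(X)=X\otimes X$, $m_0(X\otimes X)=0$ with $m_0$ otherwise unchanged, and $\iota,\epsilon$ unchanged. This is precisely the Khovanov Frobenius system on $\bbC[X]/(X^2)$, so $(E_0,d_0)$ is a Khovanov-type complex; keeping track of the resolution and grading conventions of \cite{CC1} (in particular $\deg 1=-1$), it is the $\bbC$-Khovanov complex $CKh^{*,*}(L^!)$ of the mirror, whence $E_1\cong Kh_{\bbC}^{*,*}(L^!)$.

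For invariance of the higher pages I would use the standard fact that a filtered chain homotopy equivalence induces isomorphisms on all pages $E_j$ with $j\ge 1$: if $fg-\id=dH+Hd$ and $gf-\id=dH'+H'd$ with $f,g$ and the homotopies all non-increasing in the quantum grading, then passing to associated gradeds gives $\mathrm{gr}(f)\,\mathrm{gr}(g)-\id=d_0\,\mathrm{gr}(H)+\mathrm{gr}(H)\,d_0$ (and symmetrically), so $\mathrm{gr}(f)$ is a homotopy equivalence of $(E_0,d_0)$, hence an isomorphism on $E_1$ and, by the comparison theorem for spectral sequences, on every $E_j$, $j\ge1$, compatibly with the differentials $d_j$. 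It therefore suffices to check that, for each Reidemeister move relating $L$ to $L'$, the chain maps $\calC^*_{a,h}(L)\rightleftarrows\calC^*_{a,h}(L')$ and the null-homotopies constructed in \cite{CC1} are all filtered. This should hold because they are $\bbC$-linear combinations of foam-induced maps that are homogeneous over the graded ring $R$, and specializing the positive-degree parameters $a,h$ to complex numbers can only lower the quantum grading of any term — the very reason $d$ itself becomes non-increasing.

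The main obstacle is exactly this last verification: one has to return to the explicit Reidemeister-invariance maps and contractions of \cite{CC1} and confirm, term by term after setting $a,h\in\bbC$, that each map \emph{and} each homotopy is quantum-grading non-increasing; only then does the homological-algebra input above apply and yield invariance of every page $E_j$, $j\ge1$. A secondary, bookkeeping-style point is to fix the homological and internal gradings in the $E_1$-identification so that the mirror $L^!$ — rather than $L$ — genuinely appears, which is dictated by the conventions of \cite{CC1}.
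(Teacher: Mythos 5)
Your proposal matches the paper's proof in its overall structure: define the quantum filtration, observe that $d$ does not increase the quantum grading so each $F^k\calC^*_{a,h}(L)$ is a subcomplex, note boundedness to get convergence, identify the $E_0$-differential as the grading-preserving part of $d$ (the Khovanov differential over $\bbC[X]/(X^2)$), so $E_1 \cong Kh^{*,*}_{\bbC}(L^!)$, and finally use the comparison theorem (Weibel's Mapping Lemma) to propagate an $E_1$-isomorphism to all higher pages. All of that agrees with the paper.

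Where you diverge is in how you get the $E_1$-isomorphism. You propose to verify that the chain maps $f,g$ from \cite{CC1} \emph{and} their null-homotopies $H,H'$ are all filtered, so that passing to associated gradeds yields a chain homotopy equivalence $(\mathrm{gr}\,f,\mathrm{gr}\,g)$ on $(E_0,d_0)$ directly. You correctly flag this as the main outstanding verification. The paper takes a cleaner route that sidesteps the homotopies entirely: it only checks that $f$ and $g$ are filtered (degree $0$), and then observes that the induced maps $\overline{f}_0,\overline{g}_0$ on $E_0$ fit into commutative squares with the isomorphisms $\psi_i\co CKh^{*,*}(D_i^!) \to E_0^{*,*}(D_i)$ and with Bar-Natan's homotopy-inverse chain maps $F,G$ on the Khovanov complexes. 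Since $F$ and $G$ are already known to be a homotopy equivalence (Bar-Natan's invariance theorem), conjugating by the $\psi_i$ shows $\overline{f}_0,\overline{g}_0$ are too, hence $\overline{f}_1$ is an isomorphism, and the comparison theorem finishes it. In short: you want to re-derive the $E_0$-level homotopy equivalence from scratch by tracking the filtration through the homotopies of \cite{CC1}, whereas the paper imports it from the known invariance of Khovanov homology. Your route would work if the verification goes through, but the paper's argument is shorter and avoids precisely the check you identify as the bottleneck.
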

\label{cobordisms}

We prove the above theorem in section~\ref{spectral sequence}. In section~\ref{cobordisms} we show that the generators for $\H$ behave well under link cobordisms. Then it is easy to see that our spectral sequence can be used to obtain an analogue of the Rasmussen invariant $s(K)$ defined in~\cite{R}, which we state it below as a corollary. 

The quantum filtration $F$ on $\H(L)$ is bounded from above and below for any link $L$, and the highest and lowest filtration levels of $\H(L)$ are
\begin{eqnarray*}
s_{\text{max}}(L) &=&\text{min}\{k \, | \,  F^k \H(L) = \H(L) \} \\
s_{\text{min}}(L) &=& \text{min}\{k \, | \,  F^k \H(L) \neq 0  \}.
\end{eqnarray*}
For any knot $K,$ we define the invariant $\overline{s}(K) = \frac{1}{2}[s_{\text{max}}(K) + s_{\text{min}}(K)].$

\begin{corollary}
For any link $L$ in $S^3,$ we have that $s_{\text{max}}(L) \geq \chi_s(L).$ In particular, for any knot $K,$ $|\overline{s}(K) | \leq 2g_s(K),$ where $\chi_s(L)$ and  $g_s(L)$ are the \textit{slice Euler characteristic}  and the \textit{slice genus} of $L,$ respectively.

\end{corollary}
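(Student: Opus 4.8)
The plan is to adapt Rasmussen's slice--genus argument from~\cite{R} to the filtered foam cohomology. Two inputs are needed. First, I would check that an oriented cobordism $\Sigma\subset\mathbb S^3\times[0,1]$ from a link diagram $L_0$ to a link diagram $L_1$ induces, through the functor of~\cite{CC1}, a chain map $\calC^*_{a,h}(L_0)\to\calC^*_{a,h}(L_1)$ and hence a map $\phi_\Sigma\co\H(L_0)\to\H(L_1)$ on cohomology. Decomposing $\Sigma$ into elementary foams and using that, with the conventions $\deg(1)=-1,\ \deg(X)=1$, a birth or death is homogeneous of degree $-1$ and a merge or split of degree $+1$ --- so that a foam of Euler characteristic $\chi$ raises the filtration level by at most $-\chi$, its deformed part differing from the homogeneous one only by terms of strictly lower quantum degree --- one obtains, writing $\ell(z)$ for the filtration level of a class $z$ (so that $s_{\max}(L)=\max_z\ell(z)$),
\[
\ell\bigl(\phi_\Sigma(z)\bigr)\ \le\ \ell(z)-\chi(\Sigma),\qquad z\in\H(L_0);
\]
this is checked on chains and descends to cohomology since $d$ never raises the quantum grading. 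The second, and substantial, input is the behaviour of the generators $h_\phi$ under cobordisms proved in Section~\ref{cobordisms}: when $\Sigma$ has no closed components, $\phi_\Sigma$ carries the all-$\alpha$ canonical generator of $\H(L_0)$ to a \emph{nonzero} scalar multiple of the all-$\alpha$ canonical generator of $\H(L_1)$, plus terms of strictly lower filtration level. This is the analogue of Rasmussen's fact that $\phi_\Sigma(\mathfrak s_{\mathfrak o})$ is a nonzero multiple of $\mathfrak s_{\mathfrak o}$; it holds because, after base changing $\calA$ to $\bbC_\alpha\times\bbC_\beta$, every elementary foam acts on the idempotents $e_\alpha,e_\beta$ by nonzero scalars, the hypothesis $\alpha\neq\beta$ being essential.

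Granting these, I would argue as follows. Choose a smooth oriented surface $S\subset B^4$ with $\partial S=L$, no closed components, and $\chi(S)=\chi_s(L)$. Deleting a small ball about an interior point of $S$ and identifying its complement in $B^4$ with $\mathbb S^3\times[0,1]$ turns $S$, minus the removed disk, into a cobordism from the unknot $U$ to $L$ of Euler characteristic $\chi_s(L)-1$; read backwards, this is a cobordism $\overline\Sigma\co L\to U$ with $\chi(\overline\Sigma)=\chi_s(L)-1$ and no closed components. For the zero--crossing diagram $\calC^*_{a,h}(U)$ reduces to $\calA$ with zero differential, so $\H(U)\cong\calA$, the filtration level of a class is just its top quantum degree, and the canonical generators $e_\alpha,e_\beta$ have level $1=s_{\max}(U)$. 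The second input then forces $\ell\bigl(\phi_{\overline\Sigma}(h)\bigr)=1$ for $h$ the all-$\alpha$ canonical generator of $\H(L)$, and putting $z=h$ into the displayed inequality gives $1\le\ell(h)-(\chi_s(L)-1)$, i.e.\ $s_{\max}(L)\ge\ell(h)\ge\chi_s(L)$.

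For the knot statement I would use $\chi_s(K)=1-2g_s(K)$ together with two facts in the spirit of~\cite{L,R}: that the two canonical generators of $\H(K)$ force $s_{\max}(K)-s_{\min}(K)=2$ (via the explicit Frobenius structure, again using $\alpha\neq\beta$), and the mirror duality $\H(K^{!})\cong\H(K)^{*}$, coming from $\calC^*_{a,h}(K^{!})\cong\calC^*_{a,h}(K)^{*}$, which negates filtration levels. Then $\overline s(K)=s_{\max}(K)-1\ge\chi_s(K)-1=-2g_s(K)$, and applying $s_{\max}(\,\cdot\,)\ge\chi_s(\,\cdot\,)$ to $K^{!}$ gives $\overline s(K)=-\overline s(K^{!})\le 2g_s(K)$, hence $|\overline s(K)|\le 2g_s(K)$. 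I expect the hard part to be exactly the second input above --- that a cobordism does not annihilate the leading (extremal) term of the canonical generator --- which is precisely what Section~\ref{cobordisms} supplies; once that is in hand, the filtration bookkeeping and the passage to the mirror are routine, the one point to watch being that the grading normalization of~\cite{CC1} is the one for which the bound reads $\chi_s(L)$ on the nose.
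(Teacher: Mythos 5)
The paper gives no explicit proof of this corollary; it merely notes that the result follows ``easily'' from the spectral sequence of Section~2 together with the cobordism propositions of Section~3. Your reconstruction is exactly the intended Rasmussen-style argument: Proposition~1 (of Section~3) gives the filtered map $\calL_C$ of degree $-\chi(C)$, Propositions~2--4 give the nonvanishing of $\calL_C$ on canonical generators when $C$ has no closed components, and the unknot computation together with the passage to the mirror closes the loop. Your sign bookkeeping (with $\deg(1)=-1,\ \deg(X)=1$, so a cup/cap has degree $-1$ and a saddle $+1$) is consistent with the paper's statement that $\calL_C$ has degree $-\chi(C)$, and the punctured-slice-surface argument $1\le\ell(h)-(\chi_s(L)-1)$ gives $s_{\max}(L)\ge\chi_s(L)$ correctly.

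Two remarks on details you assert rather than prove, both of which the paper also leaves implicit. First, when $\overline\Sigma\co L\to U$ is the punctured slice surface, the precise statement from Proposition~4 is that $\calL_{\overline\Sigma}(h_{\phi_0})=\sum_{\phi'}\lambda_{\phi'}h_{\phi'}$ over states compatible with $\phi_0$; since every component of $\overline\Sigma$ meets $L$, the single circle $U$ is ``evolved,'' there is exactly one compatible state, and the image is a nonzero multiple of a single idempotent generator of filtration level $1$ --- you get your $\ell(\phi_{\overline\Sigma}(h))=1$, but the route is through uniqueness of the compatible state rather than a ``leading term plus lower order'' phrasing. Second, the reduction of $|\overline s(K)|\le 2g_s(K)$ to the link bound uses $s_{\max}(K)-s_{\min}(K)=2$ and the duality $\H(K^!)\cong\H(K)^*$ reversing the filtration; you correctly flag these as Lee--Rasmussen-type facts, but they do require their own verification in the $\H$ setting (the gap statement is Rasmussen's Lemma~3.5 analogue, provable here from the explicit idempotent basis $\{z_1,z_2\}$ and $\alpha\neq\beta$, and the duality from $\calC^*_{a,h}(K^!)\cong\calC^*_{a,h}(K)^*$). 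Since the paper supplies no more detail than you do at these points, I would regard your proposal as a sound and complete-in-outline reconstruction of the intended proof.
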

\begin{remark}
It follows from construction that $\overline{s}(K) = -s(K^!).$
\end{remark}

\section{The spectral Sequence}\label{spectral sequence}

We explain now the filtration grading of an arbitrary element $v \in \calC^*_{a,h}(L).$ Recall that $\text{deg}(1) = -1, \text{deg}(X) = 1$ for the generators $1$ and $X$ of the algebra $\calA.$ Then $\text{deg}(v_1 \otimes v_2 \otimes \dots \otimes v_m) = \text{deg}(v_1) + \text{deg}(v_2) + \dots + \text{deg}(v_m),$ for arbitrary $v_i \in \{1,X \}.$

An arbitrary $v \in \calC^*_{a,h}(L)$ is not homogeneous but can be written as $v = v_1 + v_2 + \dots + v_l,$ where $v_j$ is homogeneous for each $j.$ We define
\[\text{deg}(v): = \text{max} \{\text{deg}(v_j) \, \vert \, j = 1, \dots ,l \}. \]
For any $v \in \calC^r_{a,h}(L),$ its filtration grading is given by
$ q(v) = \text{deg}(v) -r + n_{+} - n_{-}, $
where $n_{+}$ and $n_{-}$ are the numbers of positive and negative crossings, respectively, in $L.$ 
Finally, set 
\[ F^k \calC^*_{a,h}(L) = \{v \in \calC^*(L) \, \vert \, q(v) \leq k \}. \]
This defines and increasing filtration $\{F^k \calC^*(L)\}$ on $\calC^*(L)$
\[\dots F^{k-1}\calC^*_{a,h}(L) \subset F^{k}\calC^*_{a,h}(L) \subset F^{k+1}\calC^*_{a,h}(L) \subset  \dots \,.\]
Note that the grading defined above says that $v \in  \calC^*_{a,h}(L)$ has filtration grading $q(v)$ if and only if $v \in F^{k}\calC^*_{a,h}(L)$ but $v \notin F^{k -1}\calC^*_{a,h}(L).$
 
 If $f: A \to B$ is a map between filtered chain complexes, we say that $f$ is a \textit{filtered map of degree} $i$ if $f(F^{k}A) \subset F^{k + i}B,$ and we say that $f$ \textit{respects the filtration} if $f$ is a filtered map of degree zero, i.e., $f(F^{k}A) \subset F^{k}B.$ 
 
The filtration $\{F^{k}\calC^*_{a,h}(L)\}$ on $\calC^*_{a,h}(L)$ induces a filtration $\{F^k \H(L)\}$ on $\H(L),$ called the \textit{quantum filtration}: an element of $\H(L)$ is in $F^k \H(L)$ if and only if it is represented by a cocycle in  $F^{k}\calC^*(L).$
 
We can now prove the first result of this paper.

\begin{proof} (of Theorem~\ref{main thm})
We have $dF^k  \subset F^k$ and since only finitely many vector spaces $\calC^*_{a,h}(L)$ are non-trivial, it follows that the filtration is bounded and there is a spectral sequence which converges to $\H(L).$ 

The underlying groups of $\calC^*_{a,h}(L)$ coincide with those of $CKh^{*,*}(L^!).$ Moreover, the differential on the $E_0$-page of this spectral sequence is the part of $d$ which preserves the $q$-grading, thus corresponds to the maps (up to multiplication by some powers of $i$)
\begin{eqnarray*}
\Delta'(1) &=& 1\otimes X + X \otimes 1 \hspace{1cm} m'(1 \otimes X) = m'(X \otimes 1) = X\\
\Delta'(X) &=& X \otimes X  \hspace{2.2cm} m'(1 \otimes 1) = 1, \,m'(X \otimes X) = 0, 
\end{eqnarray*} 
which are exactly the maps used in the construction of Khovanov's complex. Therefore the $E_0$-page of the spectral sequence is isomorphic to the complex $CKh^{*,*}(L^!),$ and the first statement follows.

To prove the second part of the theorem, we need to show that the spectral sequence is invariant under the Reidemeister moves. Let $D_1$ and $D_2$ be two link diagrams related by a Reidemeister move. In~\cite{CC1}, we have constructed (formal) chain maps $f \co [D_1] \to [D_2]$ and $g \co [D_2] \to [D_1]$ which form a homotopy equivalence. These maps induce chain maps 
\[\overline{f} \co \calC^*_{a,h}(D_1) \to \calC^*_{a,h}(D_2) \quad \text{and} \quad \overline{g} \co \calC^*_{a,h}(D_2) \to \calC^*_{a,h}(D_1).\]
Both $\overline{f}$ and $\overline{g}$ are filtered chain maps which respect the filtrations on $\calC^*_{a,h}$ (this can be seen in a case by case inspection and taking the grading shifts into consideration), inducing $\bbZ \oplus \bbZ$-graded chain maps 
\[\overline{f}_k \co E_k ^{*,*}(D_1) \to E_k ^{*,*}(D_2) \quad \text{and} \quad \overline{g}_k \co E_k ^{*,*}(D_2) \to E_k ^{*,*}(D_1), \, k \geq 0.  \]

Denote by $\psi_1 \co CKh^{*,*}(D_1^!) \to E_0^{*,*}(D_1)$ and $\psi_2 \co CKh^{*,*}(D_2^!) \to E_0^{*,*}(D_2)$ the isomorphisms of $\bbZ \oplus \bbZ$-graded chain complexes obtained in the proof of the first part of the theorem.

Then it is not hard to see that the following diagrams commute
\begin{displaymath}
\xymatrix @C=13mm@R=10mm{
CKh^{*,*}(D_1^!) \ar [r]^{F} \ar [d]_{\psi_1} & CKh^{*,*}(D_2^!) \ar[d]_{\psi_2}\\
E_0^{*,*}(D_1) \ar[r]^{\overline{f}_0} & E_0^{*,*}(D_2)
} \hspace{0.5cm}
\xymatrix @C=13mm@R=10mm{
CKh^{*,*}(D_2^!) \ar [r]^{G} \ar [d]_{\psi_2} & CKh^{*,*}(D_1^!) \ar[d]_{\psi_1}\\
E_0^{*,*}(D_2) \ar[r]^{\overline{g}_0} & E_0^{*,*}(D_1)
} 
\end{displaymath}
 where $F \co CKh^{*,*}(D_1^!) \to CKh^{*,*}(D_2^!)$ and $G \co CKh^{*,*}(D_2^!) \to CKh^{*,*}(D_1^!)$ are the homotopically inverse morphisms of chain complexes (induced by the homotopically inverse morphisms given at the topological level) constructed in the proof of the Invariance Theorem in~\cite{BN1}. 
 
 The morphisms $\overline{f}_0$ and $\overline{g}_0$ therefore form  a homotopy equivalence between the associated graded complexes, and they induce mutually inverse maps on homology. But the homologies of the associated graded complexes are contained in the $E_1$-pages, which implies that the morphisms 
 $\overline{f}_1$ and $\overline{g}_1$ are mutually inverse. We can conclude thus that each $\overline{f}_k \co E_k^{*,*}(D_1) \to E_k^{*,*}(D_2), \, k \geq 1$ is an isomorphism (by a well-known result; see e.q. the Mapping Lemma 5.2.4 in~\cite{We}).
\end{proof}

\section{Behavior under cobordisms}\label{cobordisms} 

The following result is implied by the author's work in~\cite{CC1}.
\begin{proposition}
Given a link cobordism $C \subset \mathbb{R}^3 \times [0, 1]$ between links $L_0$ and $L_1$, there is an induced filtered map $\calL_C \co \H(L_0)  \longrightarrow \H(L_1)$ of degree $-\chi(C)$, well defined under ambient isotopy of $C$ relative to its boundary.
\end{proposition}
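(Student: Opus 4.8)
The plan is to leverage the functoriality of the universal $sl(2)$ foam construction from \cite{CC1} and check that passing from the bi-graded $R$-theory to the filtered $\bbC$-theory preserves the relevant estimates. Recall that in \cite{CC1} a link cobordism $C$ from $L_0$ to $L_1$, presented as a generic sequence of elementary cobordisms (births, deaths, saddles) interspersed with Reidemeister moves applied to the diagrams, induces a morphism of complexes $[C] \co [D_0] \to [D_1]$ in $\textit{Foams}_{/\ell}$, well defined up to homotopy and up to ambient isotopy of $C$ rel boundary. Applying the TQFT functor $\textbf{Foams}_{/\ell} \to \textbf{R-Mod}$ (now with $a,h$ specialized to the chosen complex numbers and coefficients extended to $\bbC$) yields a chain map $\overline{\calL}_C \co \calC^*_{a,h}(D_0) \to \calC^*_{a,h}(D_1)$. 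The first step is therefore to record that $\overline{\calL}_C$ is well defined on cohomology: this is exactly the specialization of the invariance statement of \cite{CC1}, together with the observation in the proof of Theorem~\ref{main thm} that the Reidemeister chain homotopy equivalences $\overline f, \overline g$ survive the specialization.

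The second step is the degree count. Each elementary cobordism contributes a generating morphism of the Frobenius system: a birth is the unit $\iota$, a death is the counit $\epsilon$, and a saddle is $m$ or $\Delta$; the corresponding grading shifts are $+1$, $+1$, $-1$, $-1$ in the $\deg$ grading on tensor powers of $\calA$, matching the Euler characteristic contributions $-1, -1, +1, +1$ of those pieces. In the bi-graded theory these maps are \emph{homogeneous} of exactly these degrees, so the composite has $q$-degree exactly $-\chi(C)$. Over $\bbC$ with $a,h$ specialized, $m$ and $\Delta$ are no longer homogeneous, but — exactly as noted in the excerpt for the differential $d$ — every monomial appearing in $\iota(v), \epsilon(v), m(v), \Delta(v)$ has $\deg$ at most that of $v$ plus the nominal shift; the lower-order terms (those carrying powers of $h$ and $a$) only \emph{decrease} the degree. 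Hence each elementary piece is a filtered map of degree equal to its Euler-characteristic shift, and the Reidemeister maps $\overline f, \overline g$ respect the filtration (already observed in the proof of Theorem~\ref{main thm}, once one accounts for the $n_+ - n_-$ normalization built into $q$). Composing, $\overline{\calL}_C$ is a filtered map of degree $-\chi(C)$, and it descends to a filtered map $\calL_C \co \H(L_0) \to \H(L_1)$ of the same degree by the definition of the quantum filtration on cohomology (a class in $F^k$ is represented by a cocycle in $F^k \calC^*_{a,h}$, whose image lands in $F^{k-\chi(C)}\calC^*_{a,h}$).

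The final step is independence under ambient isotopy of $C$ rel $\partial$. Two isotopic cobordism presentations are related by a finite sequence of movie moves; \cite{CC1} establishes that $[C]$ is invariant under these moves up to homotopy in $\textit{Foams}_{/\ell}$ (this is the properly-functorial-with-no-sign-indeterminacy content of that paper). Applying the functor and specializing $a,h$ preserves the homotopies, so $\overline{\calL}_C$ is independent of the presentation up to chain homotopy through filtered maps, and therefore $\calL_C$ on $\H$ is well defined. I expect the only genuine point requiring care — the main obstacle — to be the bookkeeping of grading shifts: one must fix the convention that the homological shift $[\,\cdot\,]$ and the quantum shift $\{\cdot\}$ attached to the cube of resolutions are chosen so that the edge maps and the elementary cobordism maps have the stated degrees, and then verify that the $n_+ - n_-$ correction in the definition of $q(v)$ transforms correctly when a Reidemeister I move changes the writhe; everything else is a routine consequence of functoriality in \cite{CC1} together with the ``degree cannot increase'' property of the specialized Frobenius maps.
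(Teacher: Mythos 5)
The paper does not supply an argument for this proposition; it simply records it as a consequence of the functoriality established in \cite{CC1}. Your reconstruction---pull the cobordism-induced chain maps from \cite{CC1}, base change to $\bbC$ and specialize $a,h$, check degrees on elementary pieces, and invoke movie-move invariance to get well-definedness up to filtered homotopy---is exactly the route that citation is standing in for, and it is correct in outline. The only thing to flag is a bookkeeping slip in your degree count: with the paper's convention $\deg(1)=-1$, $\deg(X)=1$, the maps $\iota,\epsilon,m,\Delta$ have $\deg$-shifts $-1,-1,+1,+1$ respectively, and the Euler characteristics of the corresponding cobordism pieces (birth, death, saddle, saddle) are $1,1,-1,-1$; you wrote $+1,+1,-1,-1$ and $-1,-1,+1,+1$, which is the Khovanov-standard convention $\deg(1)=1,\deg(X)=-1$ rather than this paper's. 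The two sign reversals cancel in your final claim, so the conclusion ``degree $=-\chi(C)$'' survives, but the intermediate numbers do not match the paper's setup and should be corrected. Everything else---that specializing $a,h$ to scalars of degree zero makes the lower-order terms only drop in $\deg$, so the elementary maps remain filtered of the stated degree; that the Reidemeister maps are filtered of degree $0$ once the $n_+-n_-$ normalization in $q$ is taken into account; and that the movie-move homotopies of \cite{CC1} persist after specialization, giving independence of the presentation---is exactly what is needed, and you have stated it correctly.
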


We would like to know the behavior of the map $\calL_C$ with respect to the generators for $\H$. Recall that an \textit{elementary cobordism} corresponds to a Reidemeister move and a Morse move (creation/annihilation of a circle and saddle move), and that an arbitrary link cobordism $C\co L_0 \to L_1$ can be decomposed into a union of elementary cobordisms $C = C_1 \cup C_2 \cup \dots \cup C_l.$ The induced morphism $\calL_C \co \H(L_0)  \longrightarrow \H(L_1)$ is then the composition $\calL_{C_l} \circ \dots \circ \calL_ {C_1}.$ The above proposition says that $\calL_C$ is a filtered map of degree $-\chi(C)$ and that it depends only on the isotopy class of $C$ rel $\partial C.$ Thus it suffices to study the behavior of the map $\calL_C$ with respect to the generators of $\H$ only when $C$ is an elementary cobordism. 

Before we look into this, we recall that, as a Frobenius algebra, the ring $\calA$ decomposes as $\calA \cong R \oplus R.$ The orthogonal idempotent basis $\{ z_1, z_2\}$ of $\calA$ associated to this decomposition is given by
$ z_1 = \frac{X - \alpha}{\beta - \alpha}$ and $z_2 = \frac{X - \beta}{\alpha - \beta}, $
and the Frobenius algebra structure of $\calA$ with respect to this basis has the form
\begin{eqnarray*}
 \epsilon (z_1) = \frac{1}{\beta - \alpha}, \quad \epsilon (z_2) = \frac{1}{\alpha - \beta}, \quad  \iota(1) = z_1 + z_2,  \\
m(z_1 \otimes z_1) = z_1, \quad m(z_2 \otimes z_2) = z_2, \quad m(z_1 \otimes z_2) = m(z_2 \otimes z_1) =  0,\\
 \Delta(z_1) = \frac{1}{\beta - \alpha}(z_1 \otimes z_1), \quad  \Delta(z_2) = \frac{1}{\alpha - \beta}(z_2 \otimes z_2). 
 \end{eqnarray*}

We know that each canonical state $\phi$ of a link $L$ defines a canonical resolution $\Gamma_{\phi}$ which induces a \textit{canonical generator} $h_{\phi} \in \H(L).$ Each connected component of  a canonical state is a closed web containing $2l$ bivalent vertices (where $l \in \bbZ, l\geq 0$),  $l$ arcs labeled $\alpha$ and $l$ arcs labeled $\beta.$ All arcs labeled $\alpha$ are oriented the same way, but oppositely oriented to those labeled $\beta.$ From~\cite{CC1} we know that there are certain isomorphisms that can be used to `remove' all arcs labeled $\beta$ (or all arcs labeled $\alpha$) and `replace' each closed web by an oriented circle labeled $\alpha$ (or $\beta$), whose orientation is given by that of the arcs labeled $\alpha$ (or $\beta$) in the original graph, as explained below (note that these isomorphisms will introduce some powers of $i$ on the per-edge maps in the complex associated to the link diagram). 
\[\raisebox{-13pt}{\includegraphics[height=0.6in]{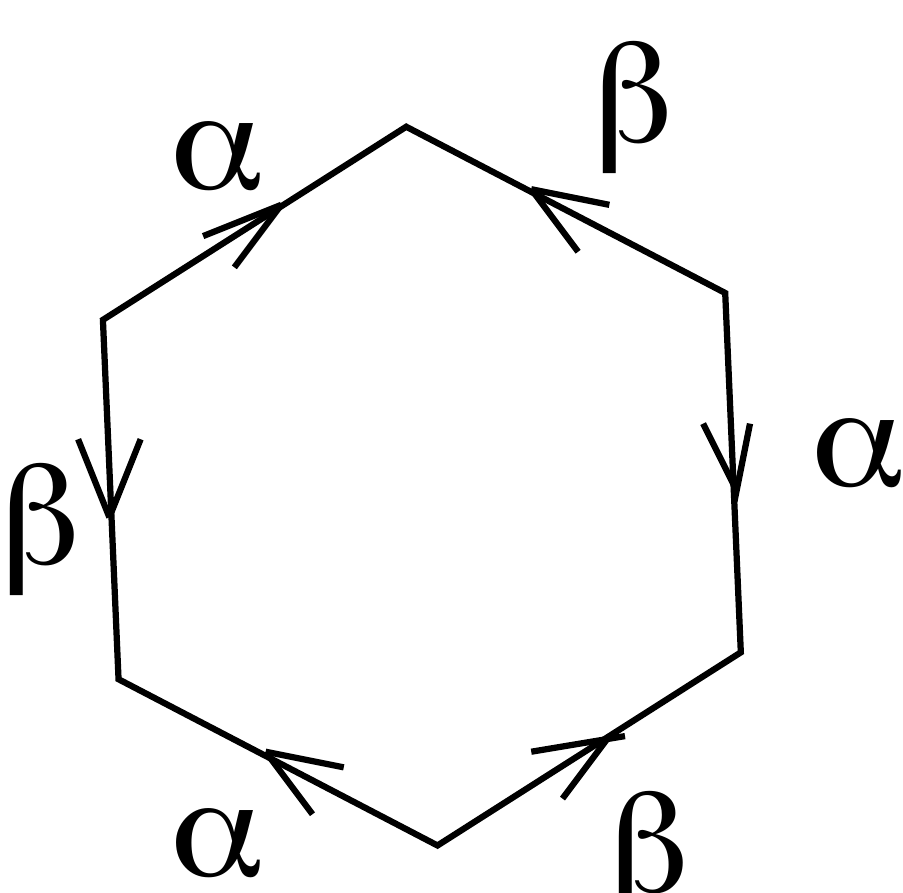}} \longrightarrow \raisebox{-13pt}{\includegraphics[height=0.5in]{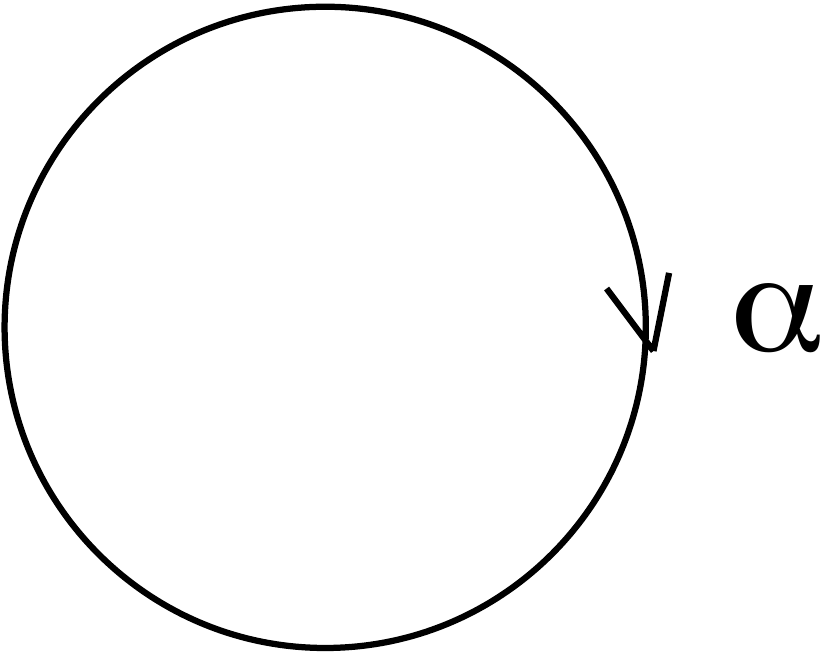}} \quad \text{or} \quad \raisebox{-13pt}{\includegraphics[height=0.5in]{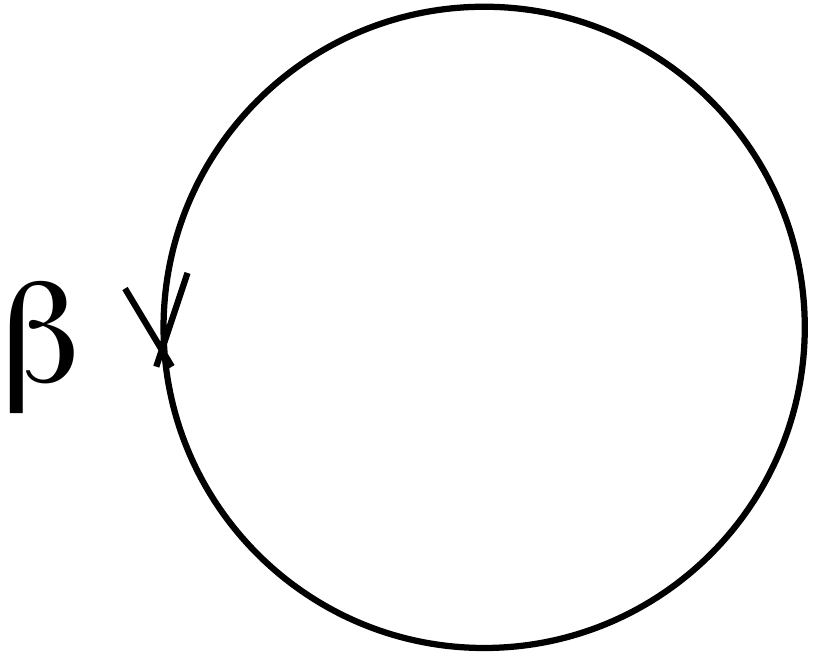}}  \]
 After applying these isomorphisms to all webs, each canonical resolution $\Gamma_\phi$ is isomorphic---in $\textbf{Foam}_{/\ell}$---to a collection of oriented circles labeled $\alpha$ or $\beta,$ some of which may be concentric. To each such  circle $C$ we assign a mod 2 invariant in  the following way:
 \begin{enumerate}
 \item[(i)] take the mod $2$ number of circles which separate $C$ from infinity; in other words, draw a ray in the plane from $C$ to infinity and take the number of times it intersects the other circles, mod $2.$ 
 \item[(ii)] To this number, add $1$ if it has the clockwise orientation, and $0$ if it has the counterclockwise orientation. 
 \item[(iii)] Moreover, add $1$ if $C$ is labeled $\beta,$ and $0$ if it is labeled $\alpha.$ 
 \end{enumerate}
 
 Finally, label the circle $C$ by $z_1$ if the resulting invariant is $0$ mod $2,$ and by $z_2$ if it is $1$ mod $2.$ An example is given below.
  
   \[ \raisebox{-33pt}{\includegraphics[height=1.3in]{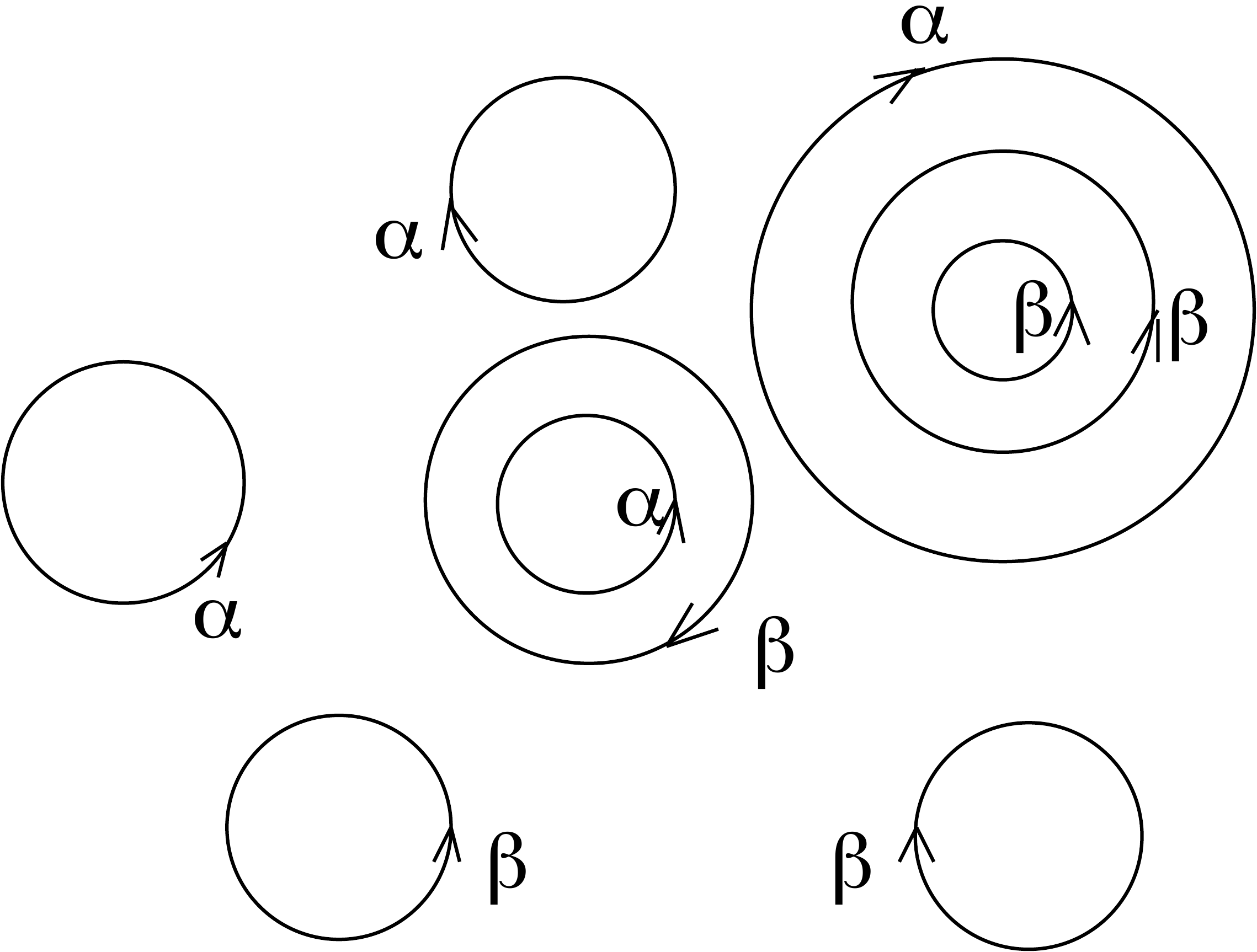}} \qquad \longrightarrow \qquad \raisebox{-33pt}{\includegraphics[height=1.3in]{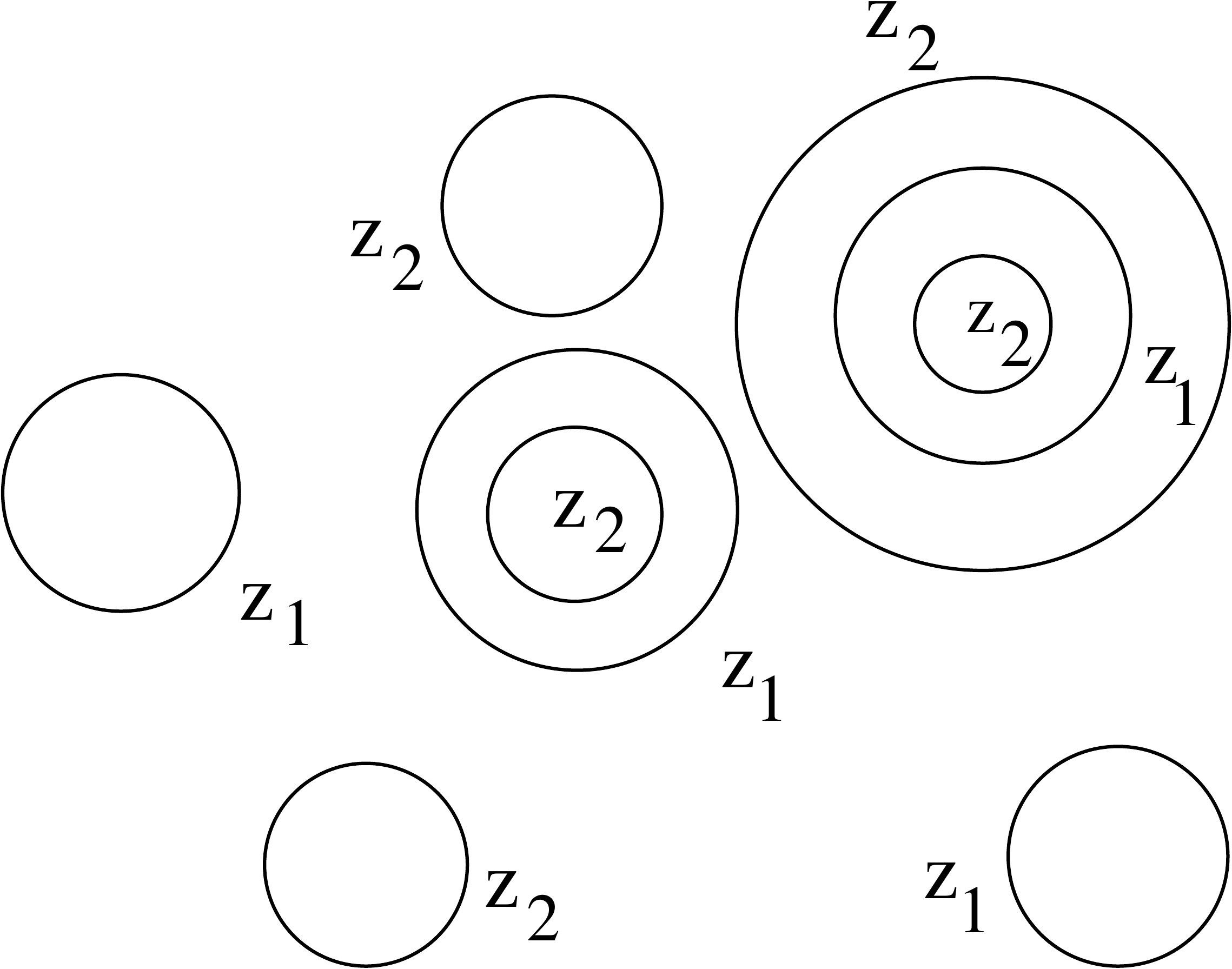}} \]
 
This procedure allows us to regard each canonical generator $h_{\phi} \in \H(L)$ as a tensor product of the idempotent elements $z_1$ and $z_2,$ making the proofs of the following two statements more manageable, via the TQFT associated to $\calA.$ 

\begin{proposition}\label{prop: Reidemeister moves}
Suppose that $C$ is an elementary link cobordism from $D_0$ to $D_1,$ where diagrams $D_0$ and $D_1$ are related by a Reidemeister move and that $\phi_0$ and $\phi_1$ are canonical states of $D_0$ and $D_1$ compatible with each other. Then the isomorphism between the homologies of the two diagrams---induced by the Reidemeister move---maps any canonical generator to a nonzero multiple of the compatible canonical generator. That is, $ \mathcal{L}_C (h_{\phi_0}) = \lambda h_{\phi_1},$ for some $\lambda \in \bbC ^*.$ 
\end{proposition}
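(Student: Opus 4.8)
The plan is to reduce everything to a local computation and then exploit the fact that, in the idempotent basis $\{z_1, z_2\}$, the TQFT attached to $\calA$ is essentially ``diagonal''. A Reidemeister move is supported inside a disk $U \subset \bbR^2$, so the chain homotopy equivalences $f \co [D_0] \to [D_1]$ and $g \co [D_1] \to [D_0]$ constructed in~\cite{CC1} are the identity outside $U$ and are given by explicit local formulas inside; the same holds for the induced maps $\overline f, \overline g$ on $\calC^*_{a,h}$. Compatibility of $\phi_0$ and $\phi_1$ means precisely that the two states agree on the arcs lying outside $U$ (equivalently, that they correspond under the natural bijection between canonical states of $D_0$ and of $D_1$). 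Since $\calL_C$ is, up to the identifications already established, the map on cohomology induced by $\overline f$, it suffices to show that $\overline f$ carries a cocycle representing $h_{\phi_0}$ to a nonzero scalar multiple of a cocycle representing $h_{\phi_1}$.

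First I would pass to the idempotent description. Applying the isomorphisms of~\cite{CC1} that replace each closed web of $\Gamma_{\phi_0}$ by an oriented circle labeled $\alpha$ or $\beta$, and then relabeling each circle by $z_1$ or $z_2$ via the mod $2$ procedure recalled above, realizes $h_{\phi_0}$ as an honest tensor product $z_{i_1} \otimes \cdots \otimes z_{i_m}$ of idempotents (at the cost of some powers of $i$ on the per-edge maps, all of which lie in $\bbC^*$). The structural point is that, with respect to $\{z_1, z_2\}$, each of $m, \Delta, \epsilon$ is \emph{monomial}: $m(z_j \otimes z_j) = z_j$, $m(z_1 \otimes z_2) = m(z_2 \otimes z_1) = 0$, $\Delta(z_j) = c_j\, z_j \otimes z_j$ with $c_j \in \bbC^*$, and $\epsilon(z_j) \in \bbC^*$. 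Hence any composite of these maps applied to a single idempotent tensor is either $0$ or a nonzero scalar times a single idempotent tensor; the only non-monomial structure map is the unit $\iota(1) = z_1 + z_2$, which enters only through the circle creations appearing in the $R1$ move and is dealt with there directly. Consequently $\overline f(h_{\phi_0})$ is, up to a coboundary, either zero or a nonzero multiple of \emph{some} canonical generator $h_\psi$ of $D_1$; and because $\overline f$ induces an isomorphism on cohomology it cannot be zero. It therefore only remains to identify $\psi$ with $\phi_1$.

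To do so I would go through the Reidemeister moves one at a time, using the explicit local chain maps from~\cite{CC1}: $R1$ in its two kink variants, $R2$, and $R3$ (the latter either reduced to compositions of $R1$ and $R2$ or checked directly). In each case the restriction of $\Gamma_{\phi_0}$ to $U$ is a definite small tangle resolution; for $R1$, since the two strands at the new crossing belong to the same component and hence carry equal $\phi$-labels, that crossing is given its oriented resolution, producing a small circle whose mod $2$ invariant one computes explicitly and whose companion cap/counit map in $f$ contributes only an element of $\bbC^*$. In every case one then reads off the idempotent tensor produced by $\overline f$ and compares it, label by label, with the idempotent tensor assigned by the mod $2$ procedure to $\Gamma_{\phi_1}$, confirming that they agree and hence that $\psi = \phi_1$. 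This yields $\calL_C(h_{\phi_0}) = \lambda h_{\phi_1}$ with $\lambda \in \bbC^*$.

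I expect the genuine work to be in this last step: verifying that the mod $2$ invariant (nesting parity, plus orientation, plus $\alpha/\beta$ label) transforms consistently under each local move, so that the circle labels of $\overline f(h_{\phi_0})$ match those dictated for $h_{\phi_1}$ rather than for some other canonical generator. The scalar bookkeeping --- powers of $i$ together with the Frobenius constants $c_j$ and $\epsilon(z_j)$ --- is harmless since it never leaves $\bbC^*$, and the monomial nature of the idempotent basis collapses all the potential algebraic complexity; what is left is a purely combinatorial check of the labeling conventions across the move.
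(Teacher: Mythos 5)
Your overall strategy---localize to the disk $U$ supporting the move, pass to the idempotent basis $\{z_1,z_2\}$, and exploit that $m,\Delta,\epsilon$ are monomial in that basis---is the same circle of ideas the paper uses, and the proposal correctly identifies where the real content lies (the final combinatorial comparison of labelings). But the proposal stops short of actually doing that comparison, and one intermediate step is not justified.

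The problematic step is the claim that ``$\overline f(h_{\phi_0})$ is, up to a coboundary, either zero or a nonzero multiple of \emph{some} canonical generator $h_\psi$.'' The monomial observation only tells you that each \emph{matrix entry} of $\overline f$ in cohomological degree $0$ carries a single idempotent tensor to $0$ or to a scalar times a single idempotent tensor. When the target complex in degree $0$ has more than one summand---which happens precisely for R3, where $D_0$ and $D_1$ have the same number of crossings---the total image of $h_{\phi_0}$ is a \emph{sum} of such idempotent tensors spread across several resolutions of $D_1$, and nothing in your argument shows that this sum is cohomologous to a multiple of a single $h_\psi$ rather than to a nontrivial linear combination of several. (For R1 and R2 the target complex is concentrated in a single resolution in degree $0$, so there your reasoning is sound.) The paper handles R3 by a different, degree-theoretic argument: in three of the four cases the canonical resolutions of $D_0$ and $D_1$ coincide and sit at the same grading shift, so the relevant chain-map component is a degree-zero endomorphism of a web, which is forced to be $i^k\,\id$; the remaining case is an explicit composite of two foams. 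You would need an argument of this kind, or an explicit computation of all matrix entries hitting the several degree-$0$ resolutions of $D_1$, to close the gap.

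Beyond that, the proposal explicitly defers ``the genuine work'' --- checking that the mod-$2$ invariant (nesting parity, orientation, $\alpha/\beta$ label) transforms consistently under each local move so that the label produced by $\overline f$ is the one assigned to $\Gamma_{\phi_1}$ --- and does not carry it out. That check \emph{is} the proof; the paper does it move by move and case by case with the explicit chain maps $f^0, g^0$ from \cite{CC1}. As written, the proposal is a correct high-level plan with a gap at R3 rather than a proof.
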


\begin{proof} \textit{Reidemeister move I}. Consider the diagrams $D_0=\raisebox{-13pt}{\includegraphics[height=0.4in]{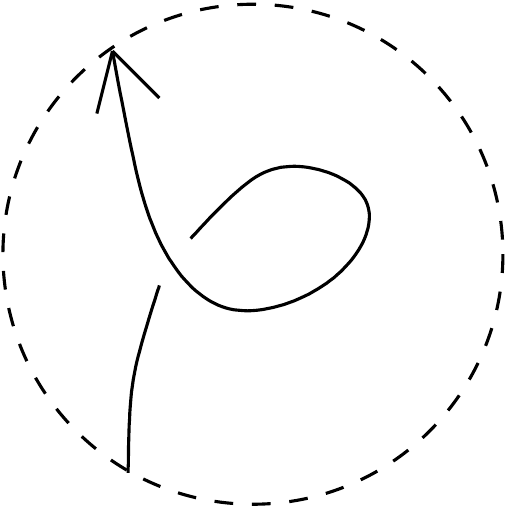}}$ and $D_1=\raisebox{-13pt}{\includegraphics[height=0.4in]{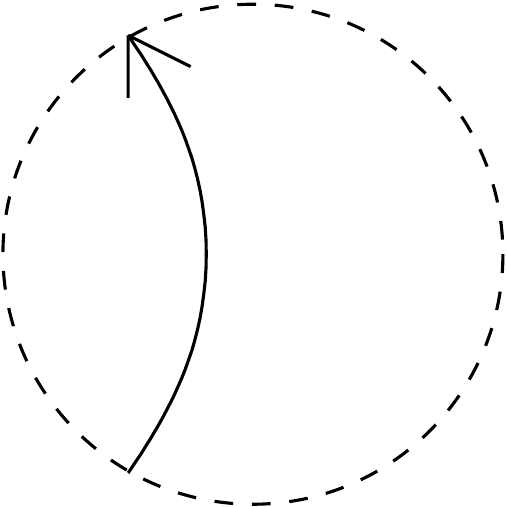}}$.
The associated formal complexes are depicted below, where we underlined the objects at the cohomological degree zero.
\[[D_0] = (0 \longrightarrow \underline{\raisebox{-8pt}{\includegraphics[height=0.3in]{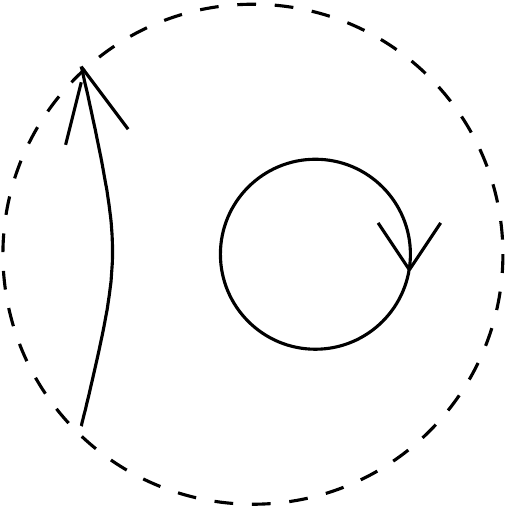}}\{-1\}}\stackrel{\raisebox{-13pt}{\includegraphics[height=0.3in]{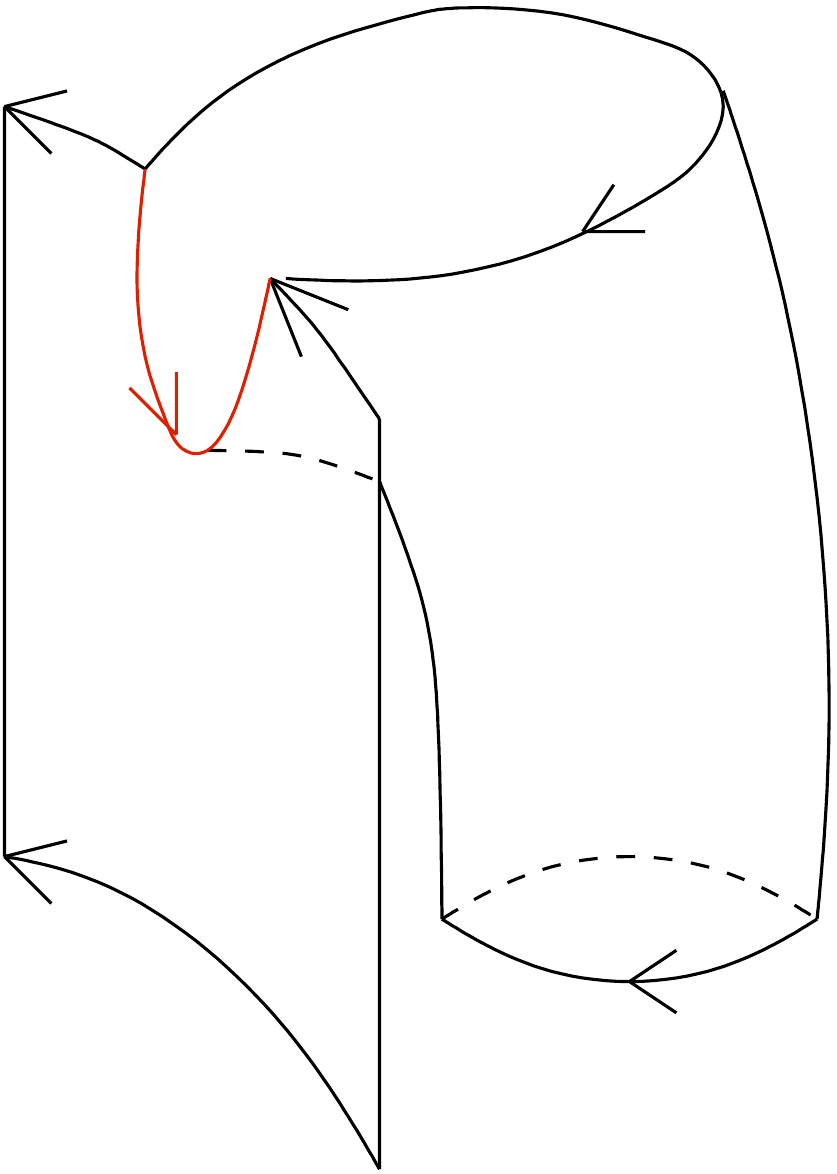}}}{\longrightarrow}\raisebox{-8pt} {\includegraphics[height=0.3in]{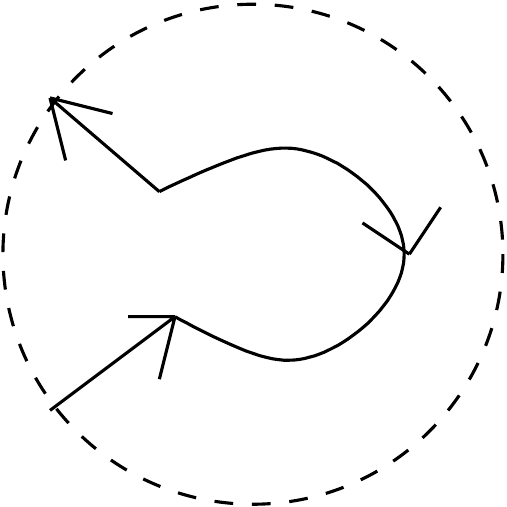}}\{-2\} \longrightarrow 0)\quad \text{and} \quad
[D_1]  = (0 \longrightarrow \underline{\raisebox{-8pt} {\includegraphics[height=0.3in]{reid1-1.pdf}}} \longrightarrow 0)\]

In~\cite{CC1} we constructed chain maps $g \co [D_0] \to [D_1]$ and  $f \co [D_1] \to [D_0]$ defining a homotopy equivalence, as follows
\[g^0 =  \raisebox{-10pt}{\includegraphics[height=0.4in]{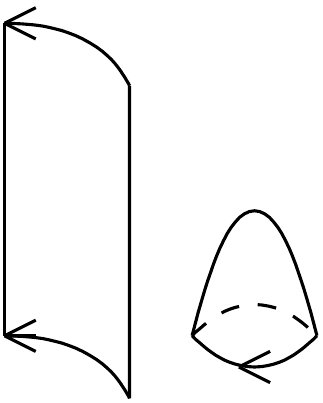}}\,, \quad f^0 = \raisebox{-10pt}{\includegraphics[height=0.4in]{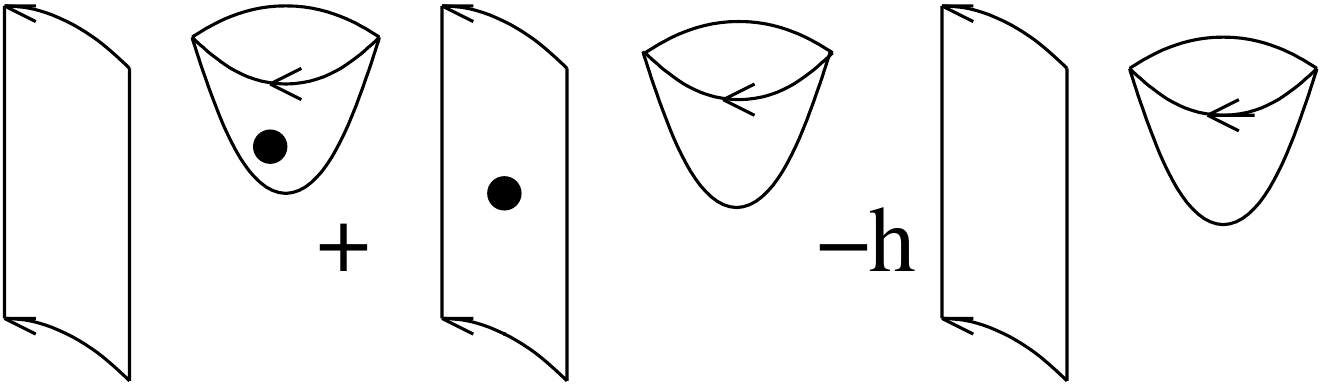}}\,, \quad g^1 = f^1 = 0.\]

The visible strand of $D_0$ and $D_1$ can be labeled by either $\alpha$ or $\beta$; assume the latter (the proof for the other case follows similarly). The corresponding canonical resolution of $D_0$  is sent via the (labeled) map $g^0$ to the corresponding compatible canonical resolution of $D_1,$ as shown below: 
\begin{figure}[ht!]
$\xymatrix@C=25mm@R = 20mm{
\raisebox{-8pt}{\includegraphics[height=0.4in]{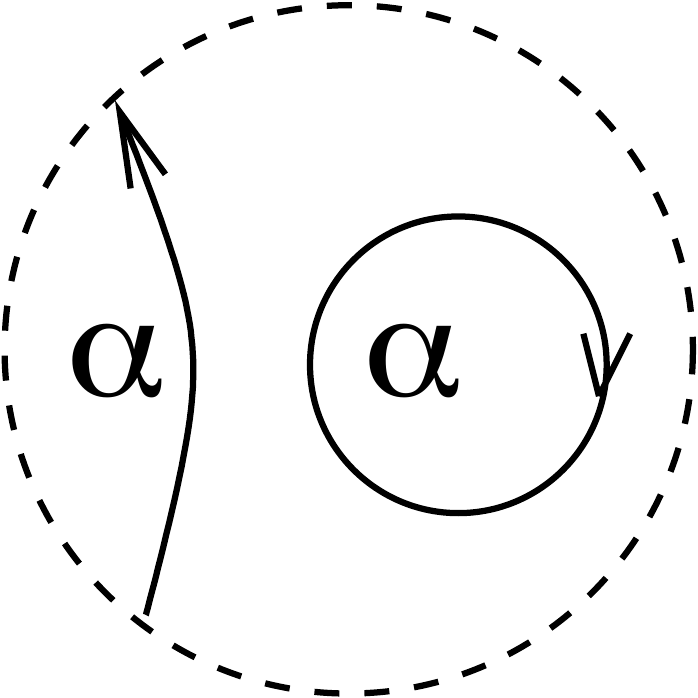}} \ar@<3pt>[r]^ {g^0\; = \; {\raisebox{-12pt}{\includegraphics[height=0.4in]{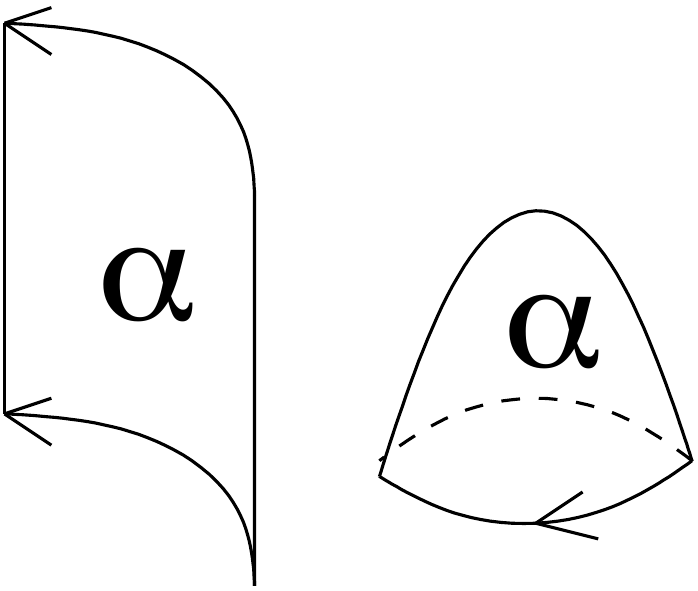}}}} & \raisebox{-8pt}{\includegraphics[height=0.4in]{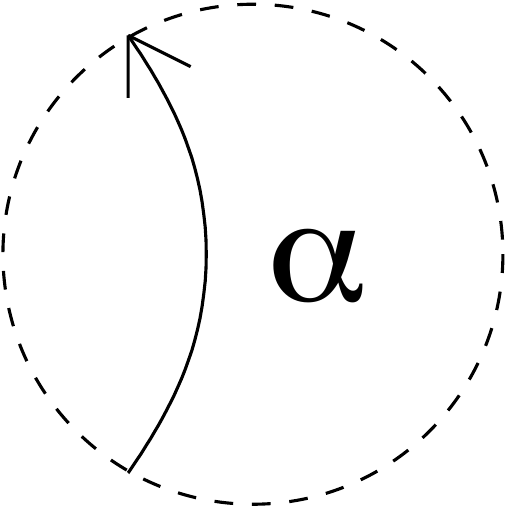}}
}$
\end{figure}

Therefore $\mathcal{L}_C (h_{\phi_0}) = \lambda h_{\phi_1},$ for some $\lambda \in \bbC^*.$ The case when the two diagrams differ by a positive kink is very similar, thus we omit it.

\textit{Reidemester IIa}. Consider $D_0 = \raisebox{-13pt}{\includegraphics[height=0.4in]{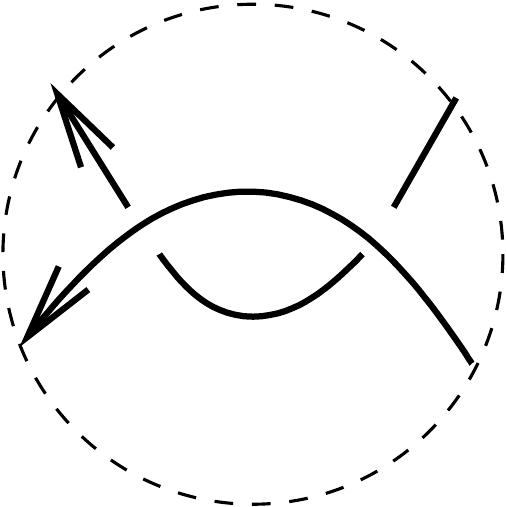}}$ and $D_1 = \raisebox{-13pt}{\includegraphics[height=0.4in]{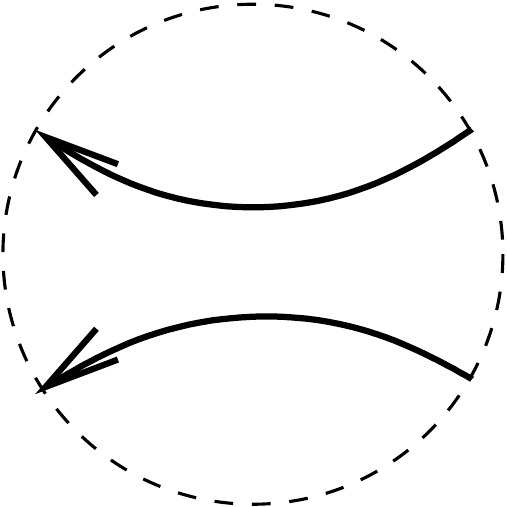}}$, and their associated chain complexes
\[[D_0] :(0 \longrightarrow \raisebox{-8pt} {\includegraphics[height=0.3in]{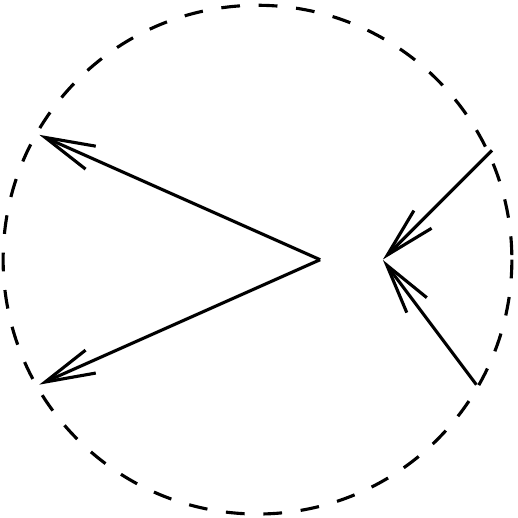}}\{1\} \longrightarrow \underline{\raisebox{-8pt} {\includegraphics[height=0.3in]{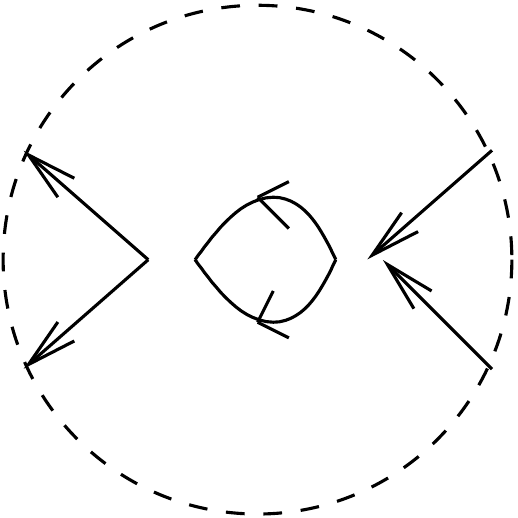}} \oplus \raisebox{-8pt} {\includegraphics[height=0.3in]{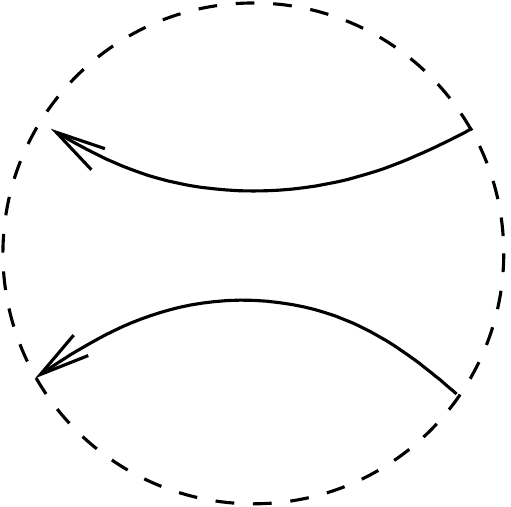}}}\longrightarrow \raisebox{-8pt} {\includegraphics[height=0.3in]{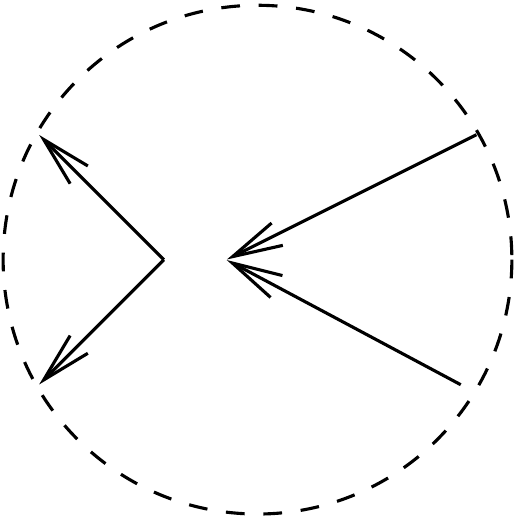}}\{-1\} )\, \text {and}\] \[[D_1]: (0 \longrightarrow \underline{\raisebox{-8pt} {\includegraphics[height=0.3in]{2arcslo.pdf}}} \longrightarrow 0).\] 
The chain maps $g \co [D_0] \to [D_1]$ and  $f \co [D_1] \to [D_0]$ defining a homotopy equivalence between $[D_0]$ and $[D_1]$ are given by
\[ g^{-1} = f^{-1} = 0, \quad g^0 = \left (\begin{array}{cc }-\raisebox{-15pt}{\includegraphics[height=0.5in]{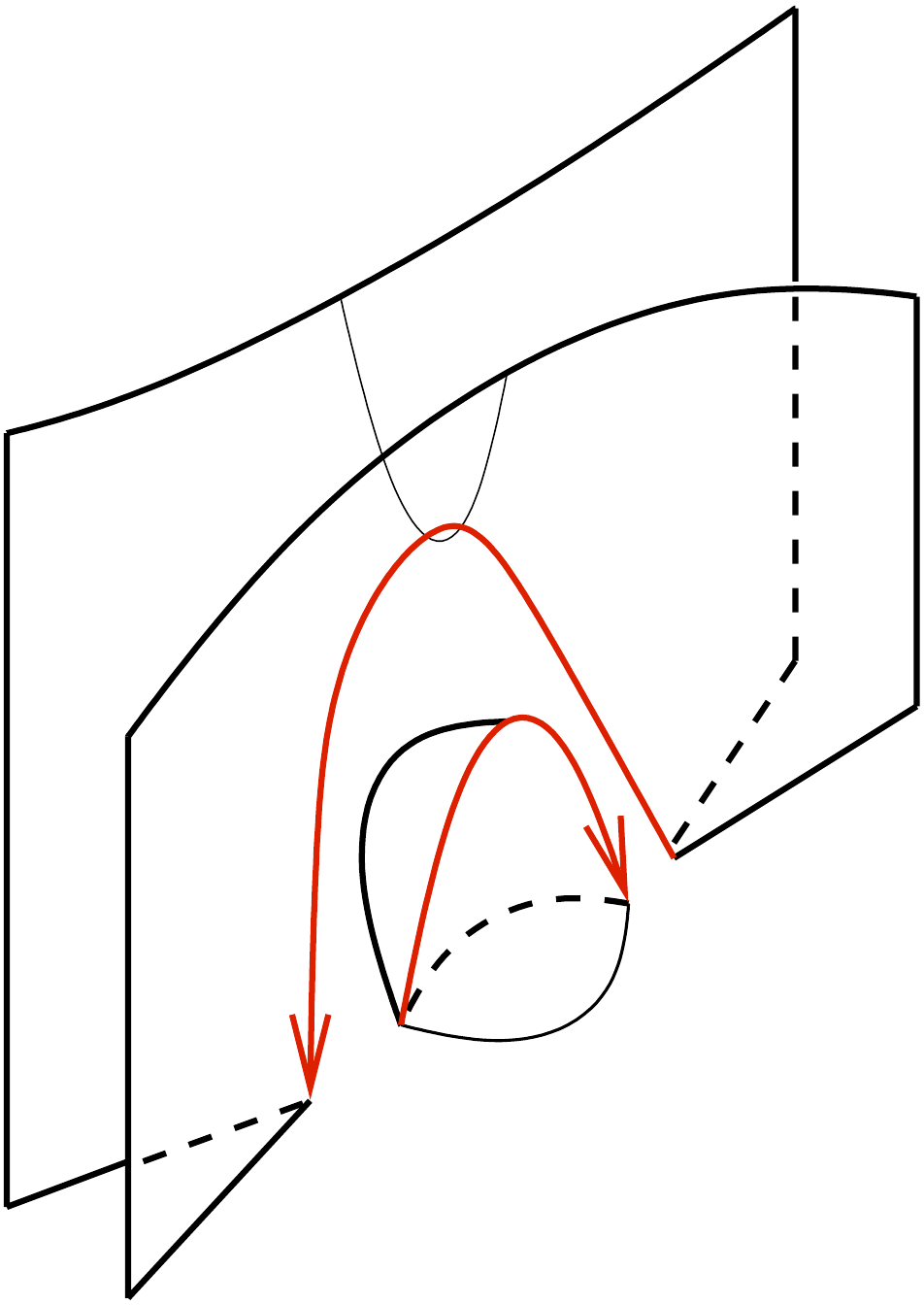}}\,, &\id \end{array} \right), \, \quad f^0 = \left( \begin{array}{c} \raisebox{-10pt}{\includegraphics[height=0.5in]{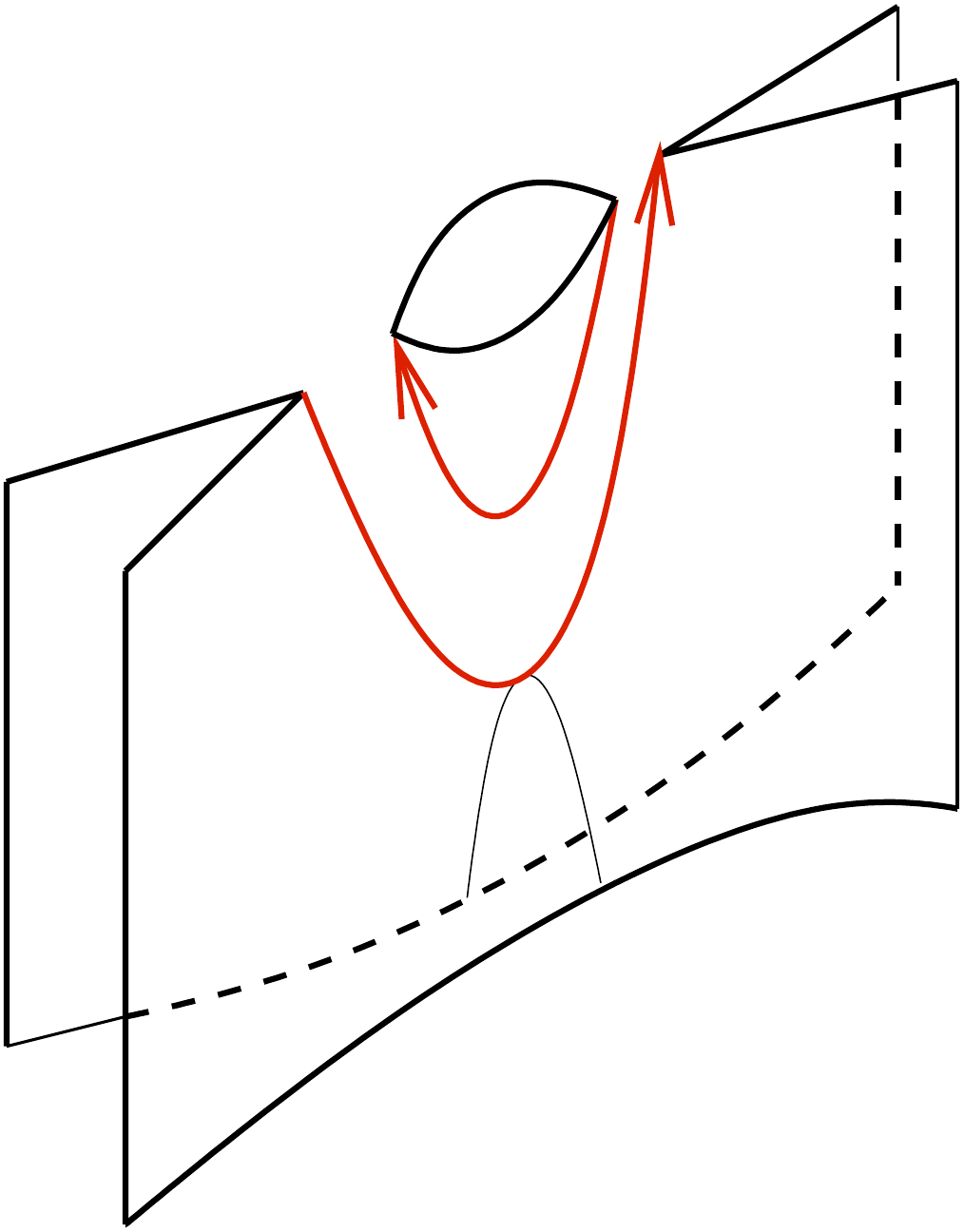}}\\ \id \end{array} \right), \, \quad g^1 = f^1 = 0.  \]

\textit{Case 1}. If the two visible strands of the diagrams $D_0$ and $D_1$ belong to the same component of the link, then they have the same label, say $\alpha$:
$$D_0=\raisebox{-13pt}{\includegraphics[height=0.4in]{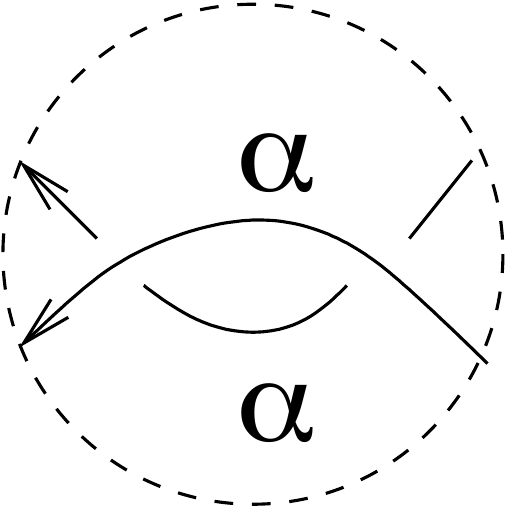}}\qquad
D_1=\raisebox{-13pt}{\includegraphics[height=0.4in]{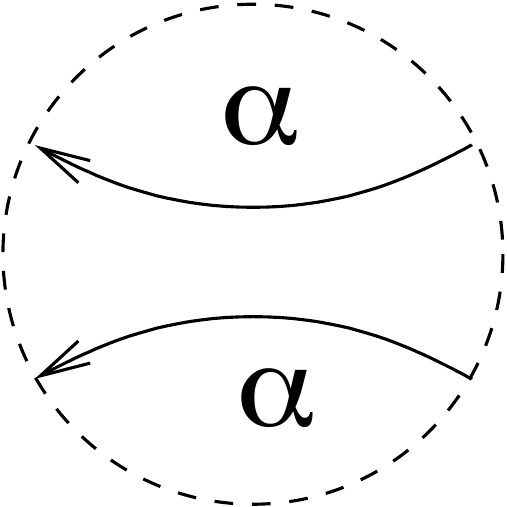}}\ $$
The canonical resolution of both labeled diagrams is $\raisebox{-13pt}{\includegraphics[height=0.4in]{twoarcs-alpha.pdf}}\ $, and the map between them constructed in the proof of invariance under the Reidemeister  2a move is the identity map, i.e., two ``curtains" labeled by $\alpha.$ Therefore $h_{\phi_0}\stackrel{\calL_C} {\longrightarrow} h_{\phi_1}.$

\textit{Case 2}. If the two visible strands of $D_0$ and $D_1$ belong to different components of the link, then they might have the same labels (and we are in the same situation as in the first case) or different labels. Assume that the strands are labeled as shown below (notice that these are compatible states for the two diagrams): 
$$D_0=\raisebox{-13pt}{\includegraphics[height=0.4in]{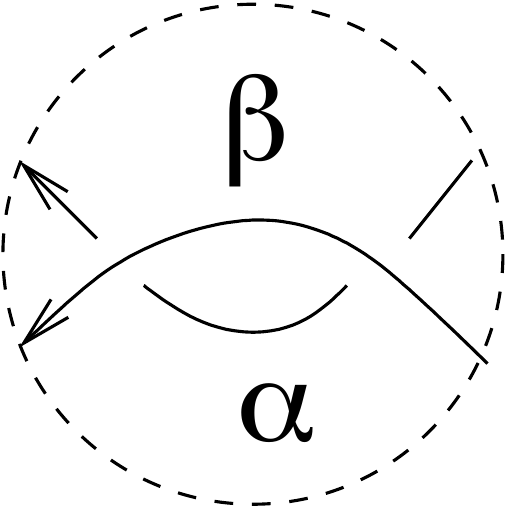}}\qquad
D_1=\raisebox{-13pt}{\includegraphics[height=0.4in]{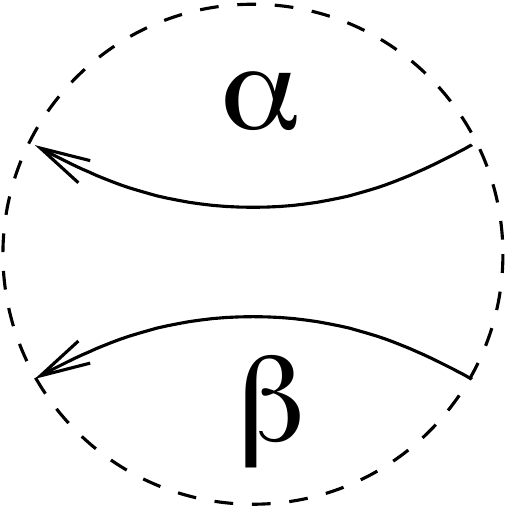}}\ $$
This time, the canonical resolution of this state for $D_0$ is $\raisebox{-10pt}{\includegraphics[height=0.4in]{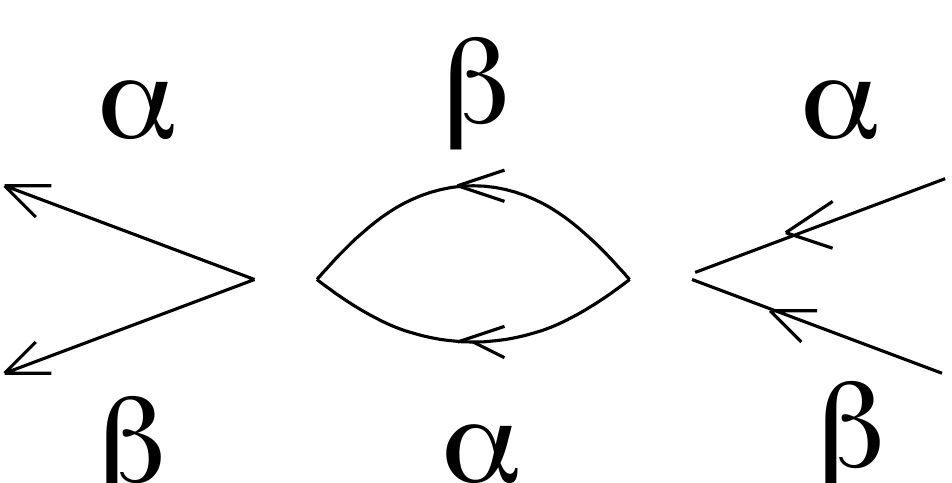}},$ which is mapped to (a nonzero multiple of) the canonical resolution of $D_1$ via the map:
\begin{figure}[ht!]
$\xymatrix@C=20mm@R = 25mm{
\raisebox{-8pt}{\includegraphics[height=0.4in]{reid2a-2-alphabeta.pdf}} \ar@<2pt>[r]^ {\quad - \,\raisebox{-18pt}{\includegraphics[height=0.65in]{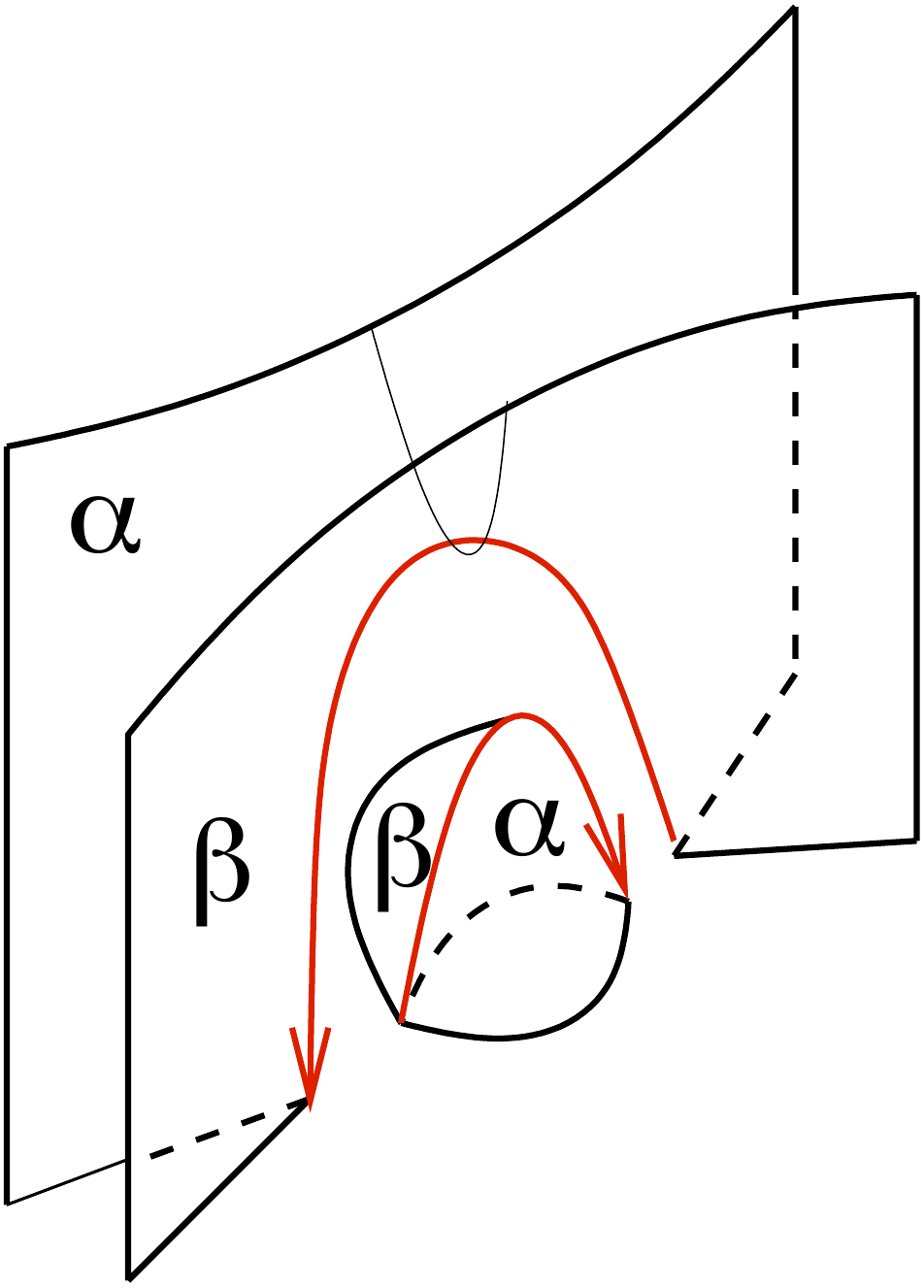}}} & \raisebox{-5pt}{\includegraphics[height=0.3in]{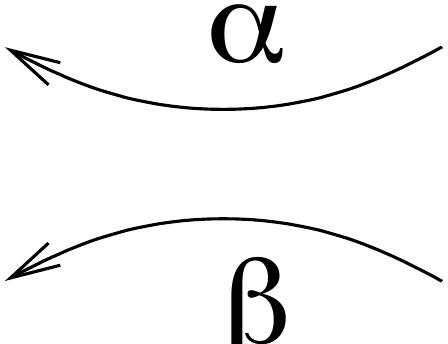}}}$
\end{figure}

Therefore the canonical generator $h_{\phi_0}$ is sent via the map $\calL_C$ to a nonzero multiple of the compatible canonical generator $h_{\phi_1}.$

\textit{Reidemester IIb}. Consider the diagrams $D_0 = \raisebox{-13pt}{\includegraphics[height=0.4in]{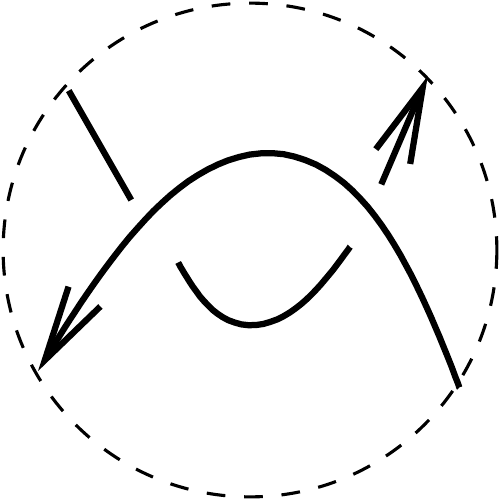}}$ and $D_1=\raisebox{-13pt}{\includegraphics[height=0.4in]{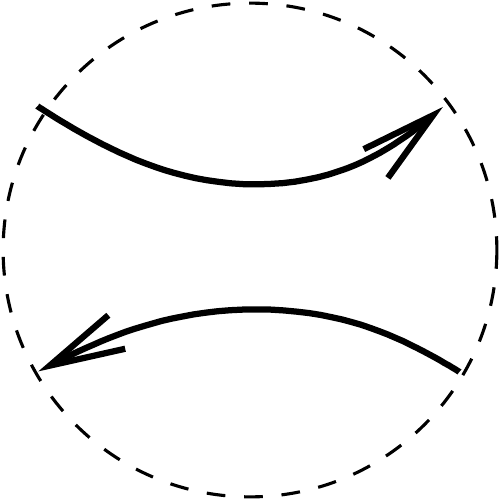}}$ and their associated complexes
 \[[D_0] :(0 \longrightarrow \raisebox{-8pt} {\includegraphics[height=0.3in]{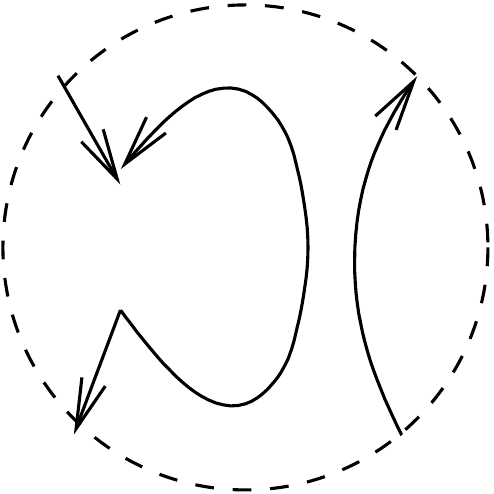}}\{1\} \longrightarrow \underline { \raisebox{-8pt} {\includegraphics[height=0.3in]{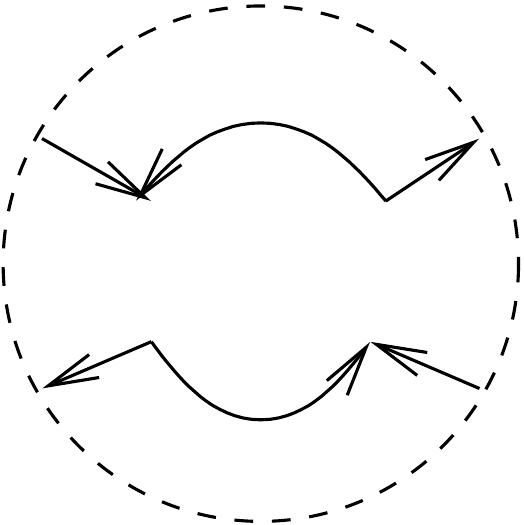}}\oplus \raisebox{-8pt} {\includegraphics[height=0.3in]{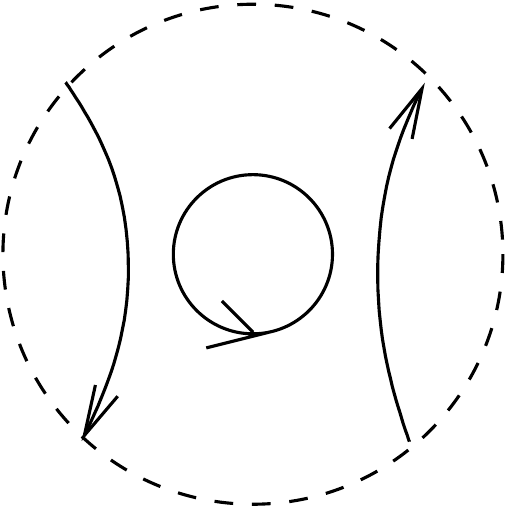}}} \longrightarrow \raisebox{-8pt} {\includegraphics[height=0.3in]{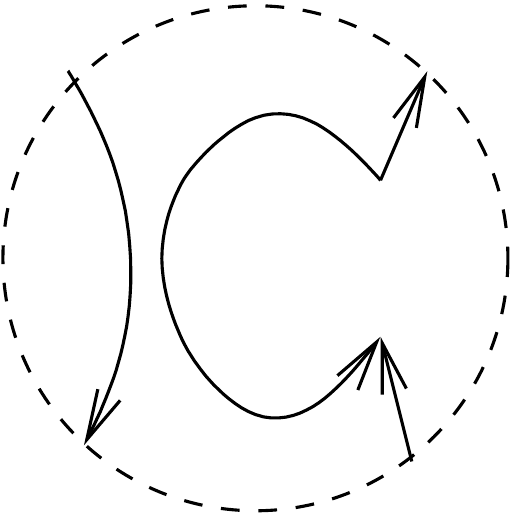}}\{-1\} )\, \text {and}\] \[[D_1]: (0 \longrightarrow \underline{\raisebox{-8pt} {\includegraphics[height=0.3in]{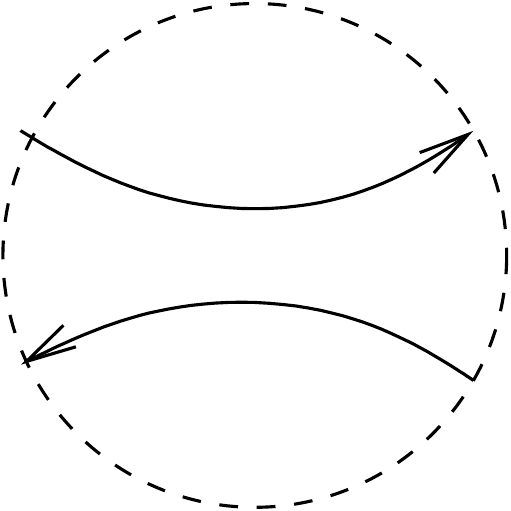}}} \longrightarrow 0).\]  
The chain maps $g \co [D_0] \to [D_1]$ and $f \co [D_1] \to [D_0]$ defining a homotopy equivalence between $[D_0]$ and $[D_1]$ are given by
\[ g^{-1} = f^{-1} = 0, \, g^0 = \left (\begin{array}{cc } -\,\raisebox{-10pt}{\includegraphics[height=0.4in]{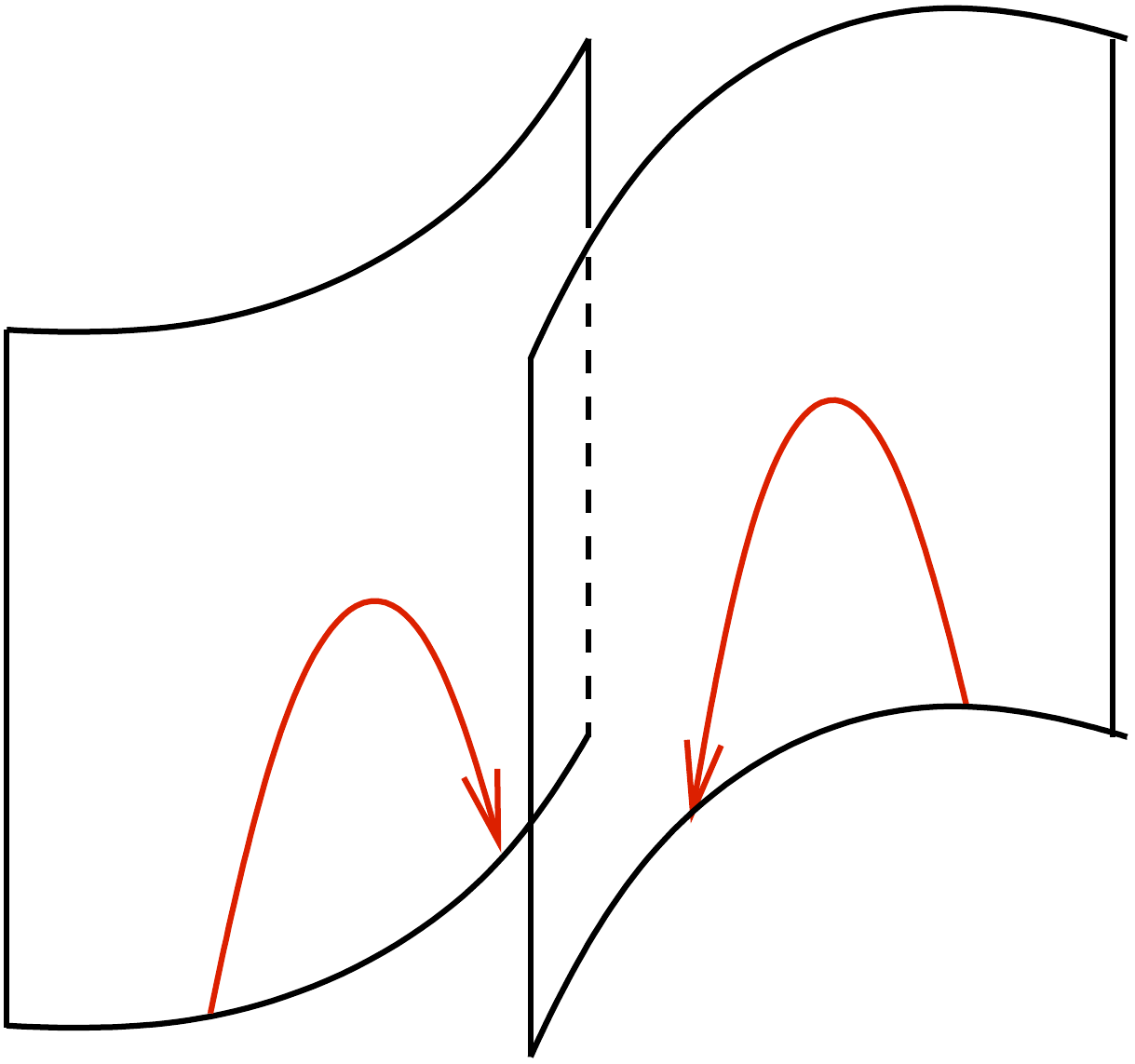}}\,, &\raisebox{-10pt}{\includegraphics[height=0.5in]{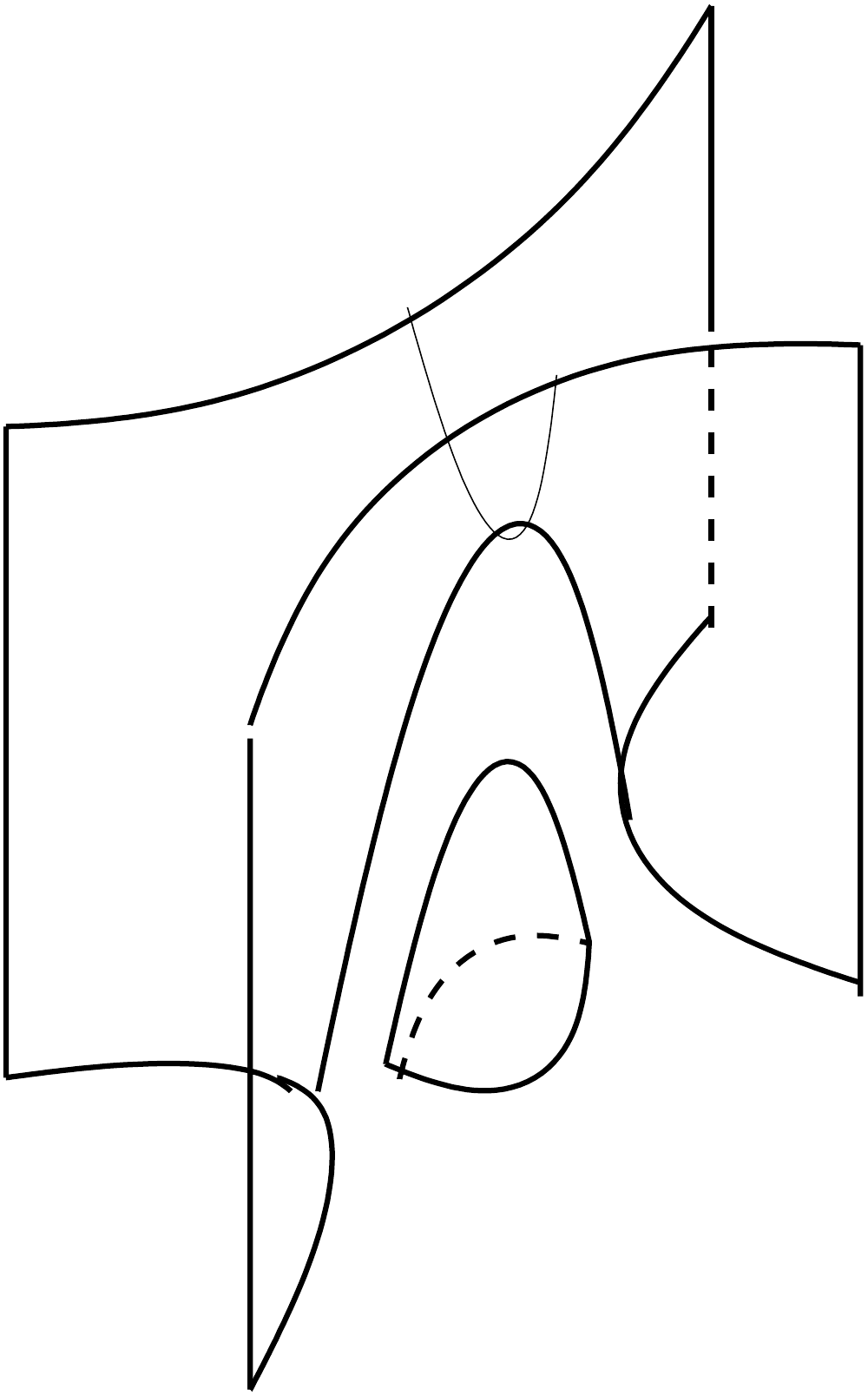}} \end{array} \right), \, f^0 = \left( \begin{array}{c} \raisebox{-10pt}{\includegraphics[height=0.4in]{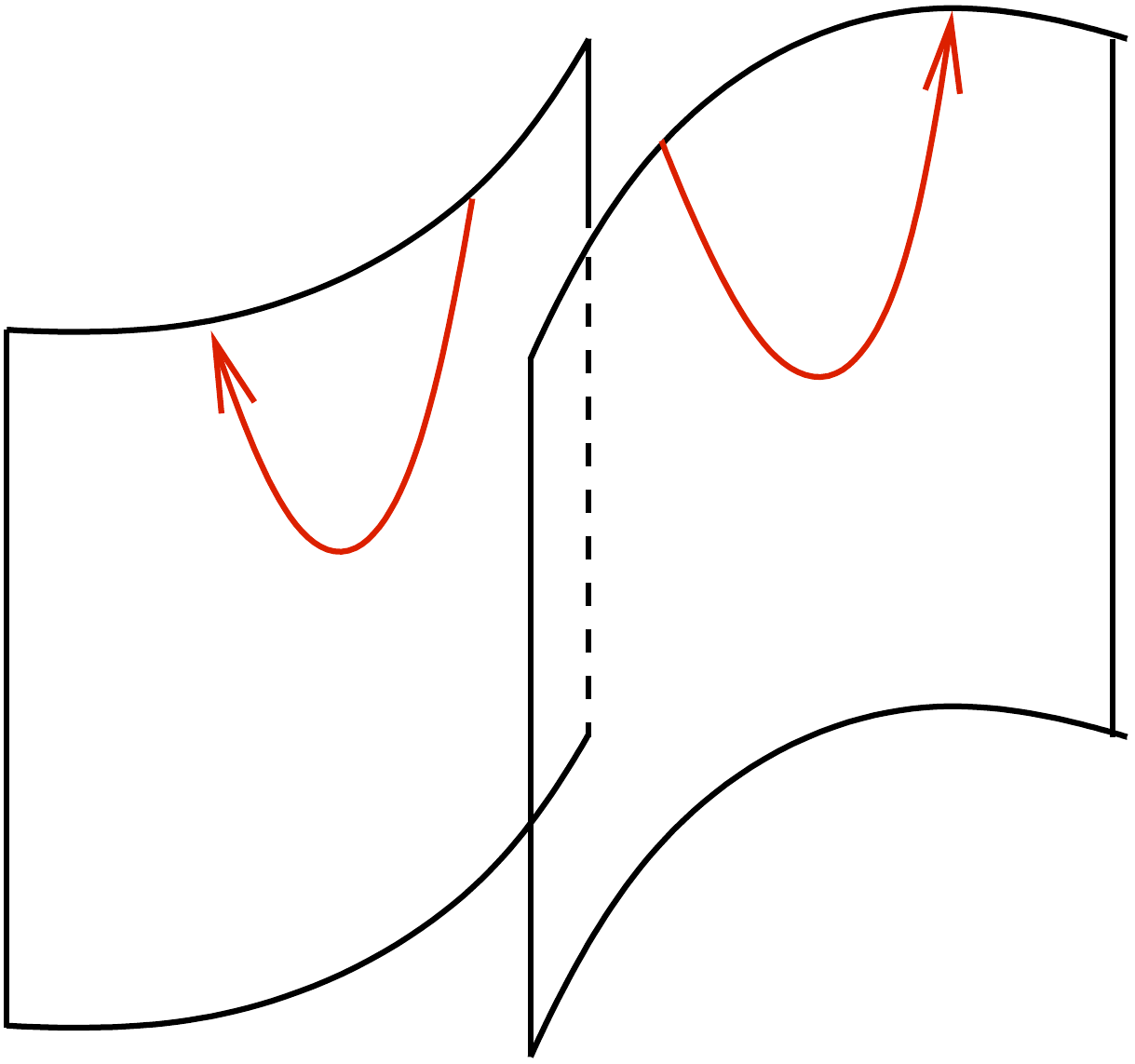}}\\  \raisebox{-10pt}{\includegraphics[height=0.5in]{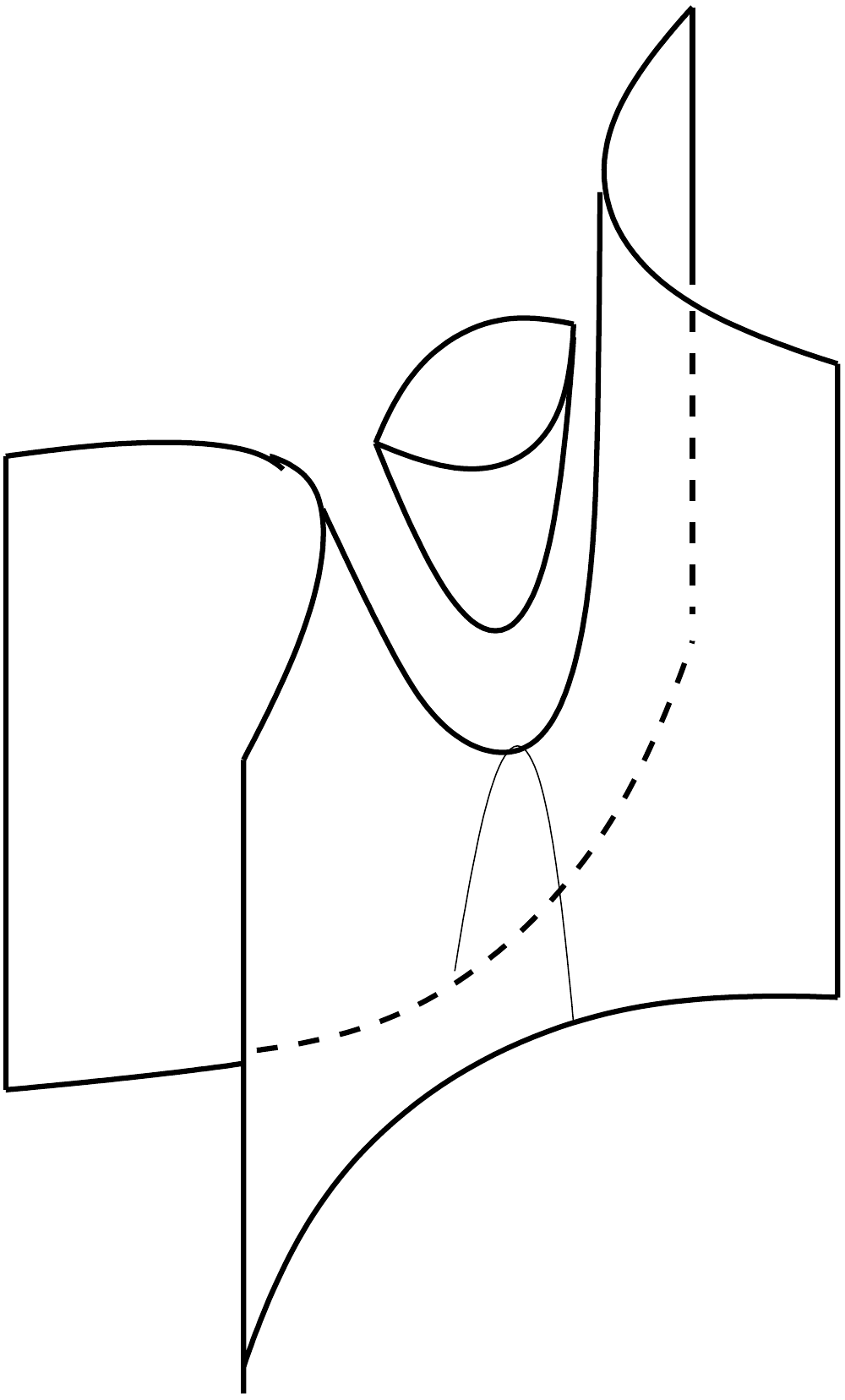}} \end{array} \right), \, g^1 = f^1 = 0. \]
There are again two cases to consider.

\textit{Case 1}. Assume that the visible two strands of the diagrams $D_0$ and $D_1$ have the same  label, say $\beta.$ Then the canonical resolutions of $D_0$ is $\,\raisebox{-8pt} {\includegraphics[height=0.3in]{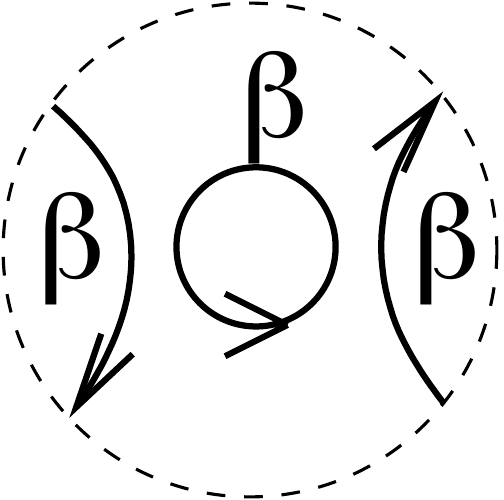}}\,$ and is mapped to the canonical resolution of $D_1$ via the map:
 \begin{figure}[ht!]
$\xymatrix@C=25mm@R = 25mm{
\raisebox{-8pt}{\includegraphics[height=0.4in]{reid2b-2-beta.pdf}} \ar@<2pt>[r]^ {\quad  \,\raisebox{-13pt}{\includegraphics[height=0.7in]{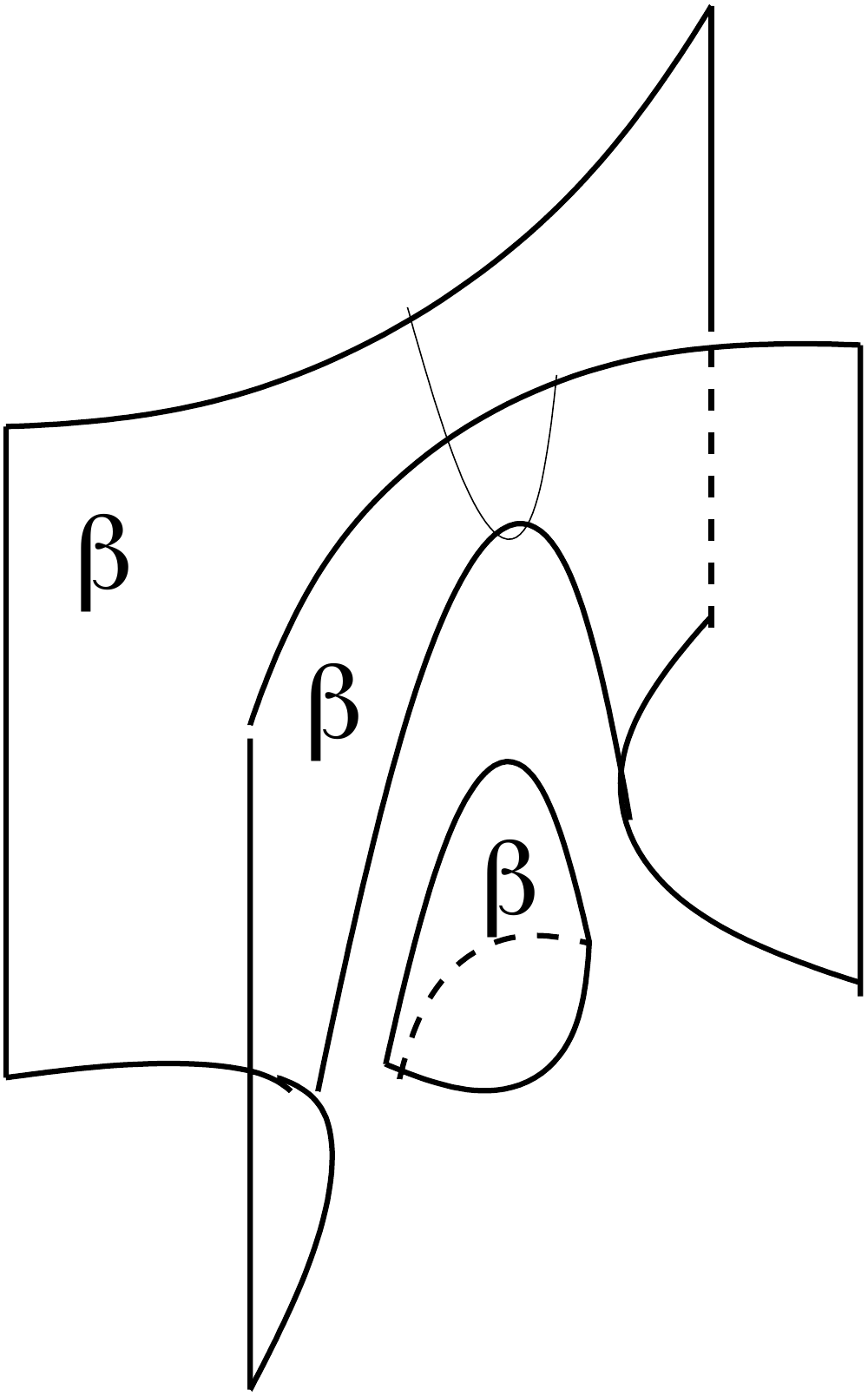}}} & \raisebox{-5pt}{\includegraphics[height=0.4in]{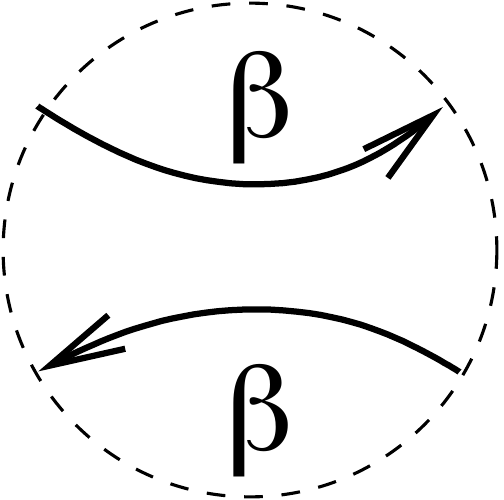}}}$
\end{figure}

\textit{Case 2}. Now we assume the visible two strands have different labels, say as shown below.
$$D_0=\raisebox{-13pt}{\includegraphics[height=0.4in]{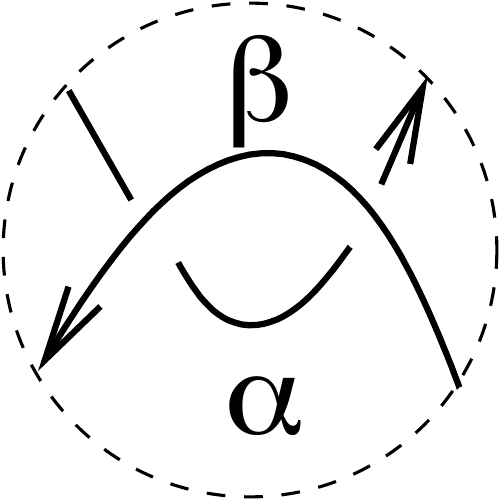}}\qquad
D_1=\raisebox{-13pt}{\includegraphics[height=0.4in]{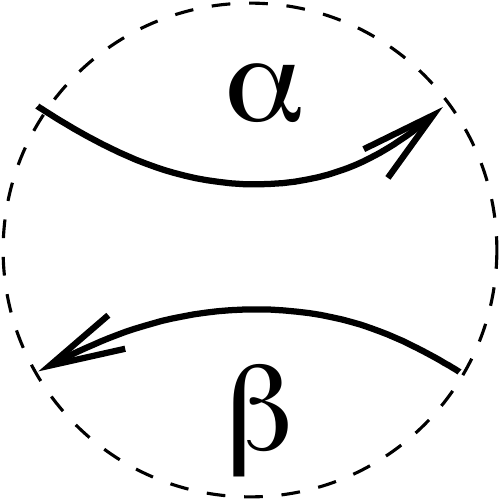}}\ $$
The canonical resolution of $D_0$ is $\,\raisebox{-8pt} {\includegraphics[height=0.35in]{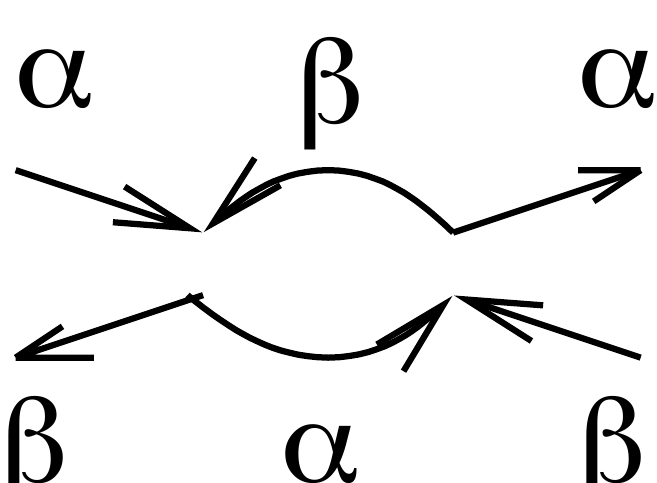}}\,$ and is mapped to a nonzero multiple of the canonical resolution of $D_1$ via the map:
 \begin{figure}[ht!]
$\xymatrix@C=25mm@R = 25mm{
\raisebox{-8pt}{\includegraphics[height=0.4in]{reid2b-3-alphabeta.pdf}} \ar@<2pt>[r]^ {\quad  \,-\,\raisebox{-18pt}{\includegraphics[height=0.6in]{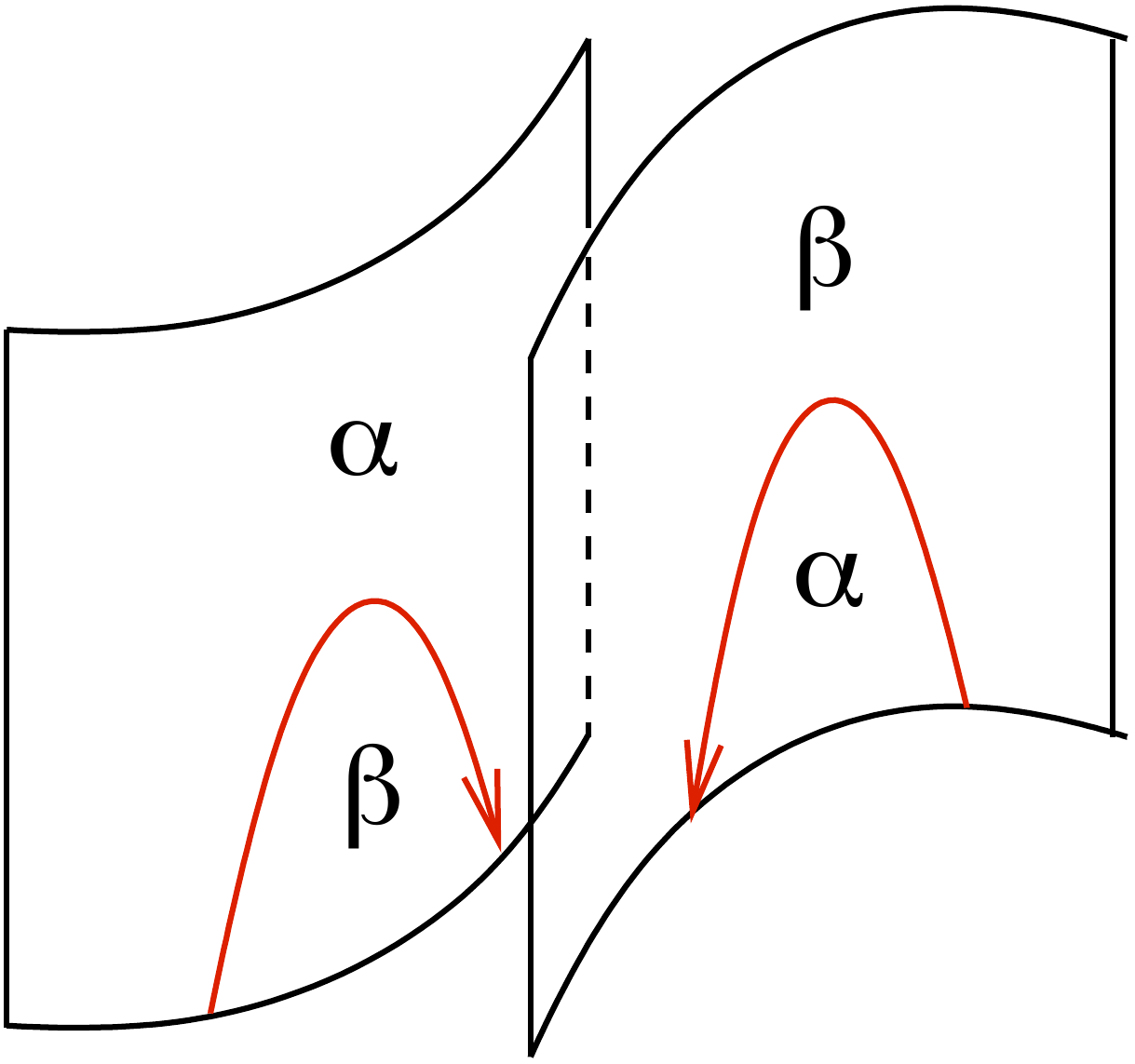}}} & \raisebox{-5pt}{\includegraphics[height=0.35in]{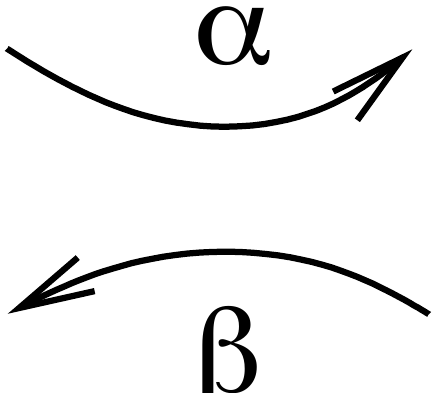}}}$
\end{figure}

In both cases, $\calL_C(h_{\phi_0}) = \lambda h_{\phi_1},$ for some $\lambda \in \mathbb{C}^*.$

 \textit{Reidemeister move III}. Consider the diagrams $D_0=\raisebox{-11pt}{\includegraphics[height=0.4in]{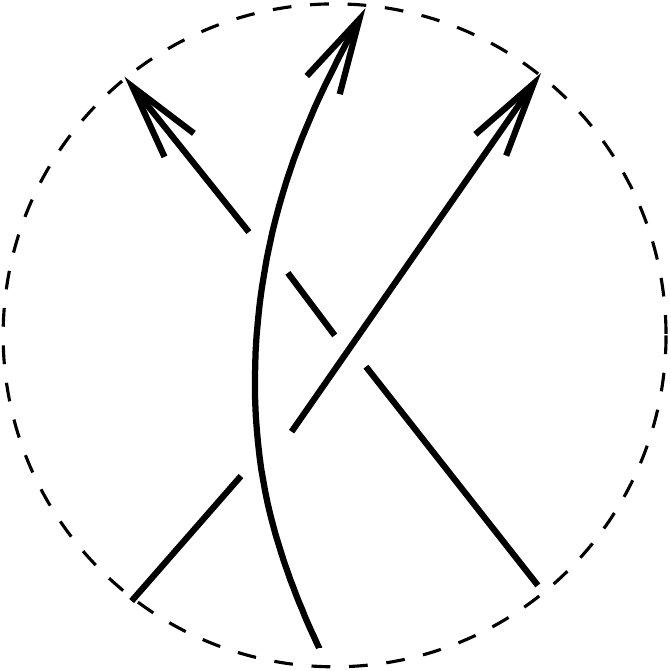}}
 $ and $D_1=\raisebox{-10pt}{\includegraphics[height=0.4in]{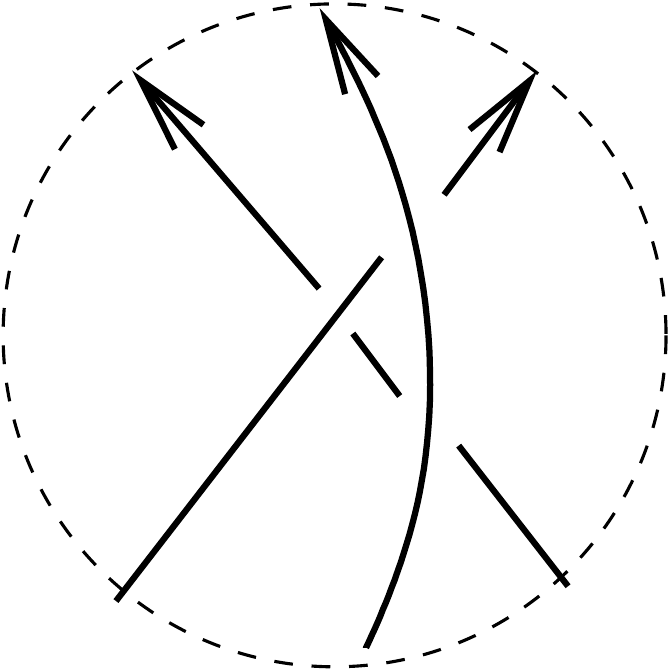}}$.  
Up to permutations (that is, up to replacing $\alpha$ by $\beta$ and vice versa), there are four cases to consider for the possible labelings of the involved arcs.

\textit{Case 1}.  \[D_0=\raisebox{-13pt}{\includegraphics[height=0.45in]{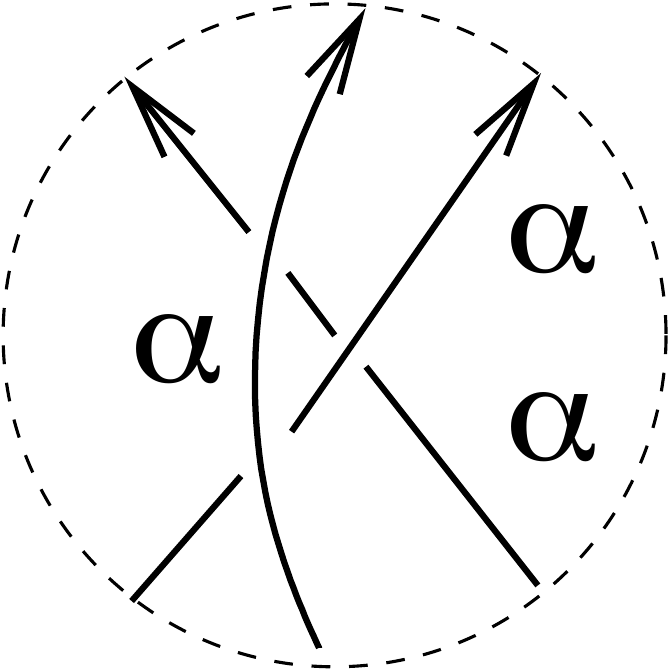}}\qquad
D_1=\raisebox{-13pt}{\includegraphics[height=0.45in]{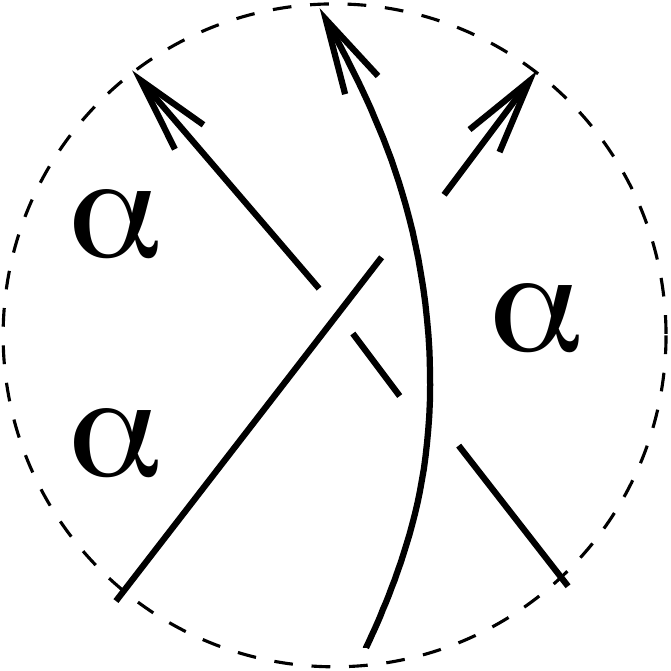}}\  \text{with canonical resolution} \ \raisebox{-13pt}{\includegraphics[height=0.45in]{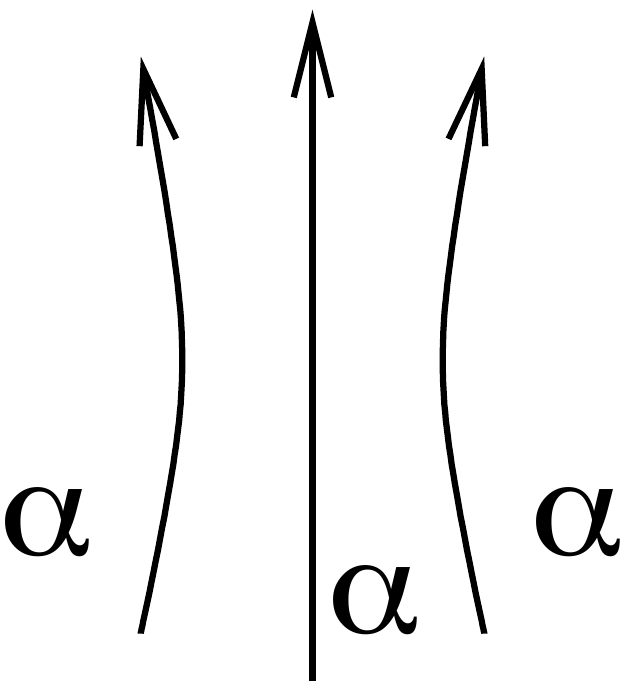}}\] 

\textit{Case 2}.  \[D_0=\raisebox{-13pt}{\includegraphics[height=0.45in]{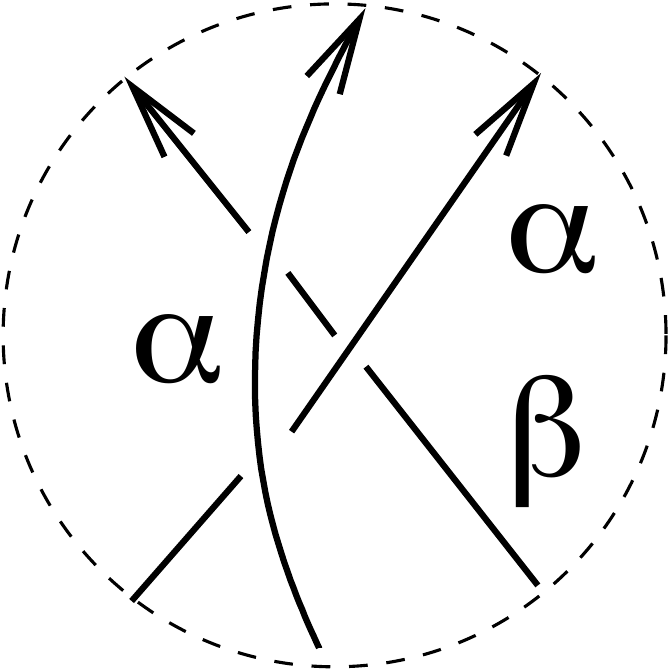}}\qquad
D_1=\raisebox{-13pt}{\includegraphics[height=0.45in]{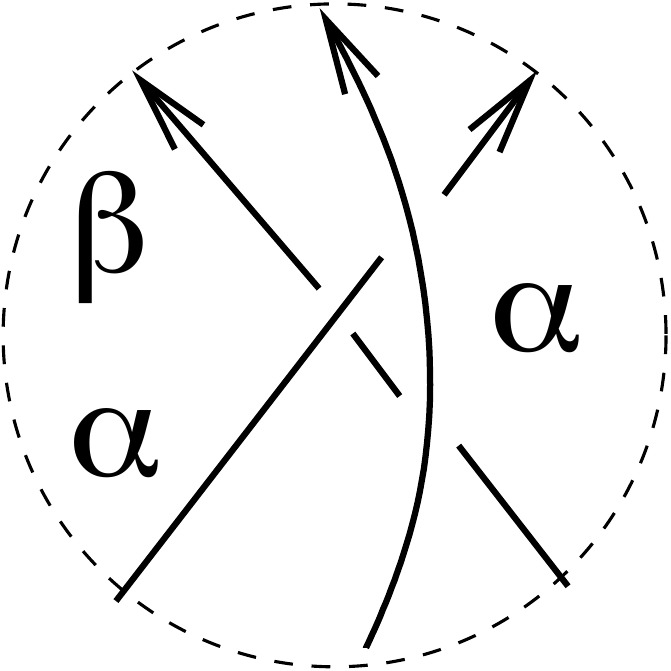}}\  \text{with canonical resolution}  \ \raisebox{-13pt}{\includegraphics[height=0.45in]{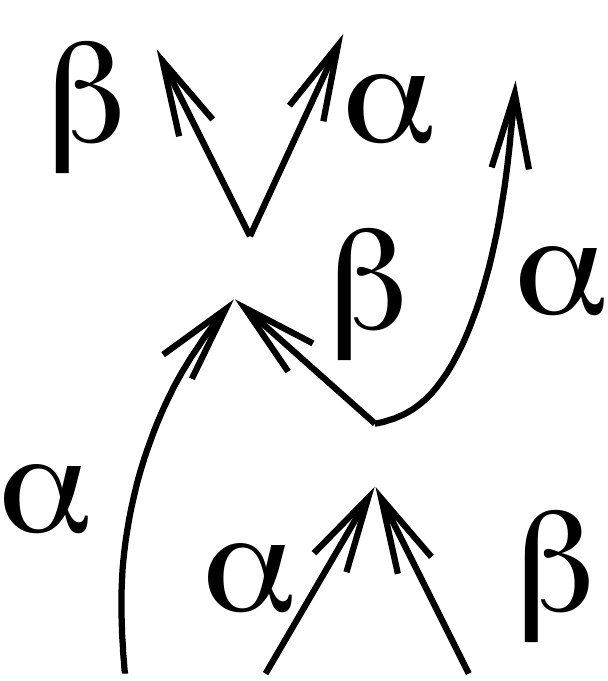}}\]

\textit{Case 3}.  \[D_0=\raisebox{-13pt}{\includegraphics[height=0.45in]{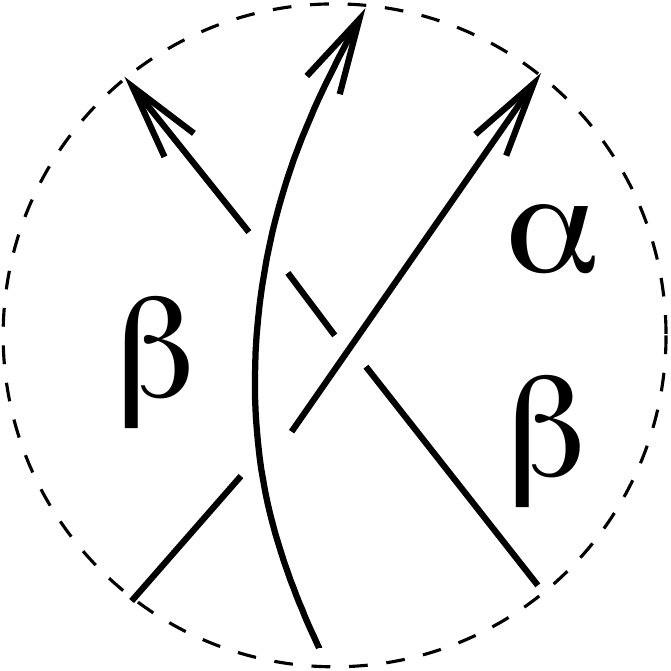}}\qquad
D_1=\raisebox{-13pt}{\includegraphics[height=0.45in]{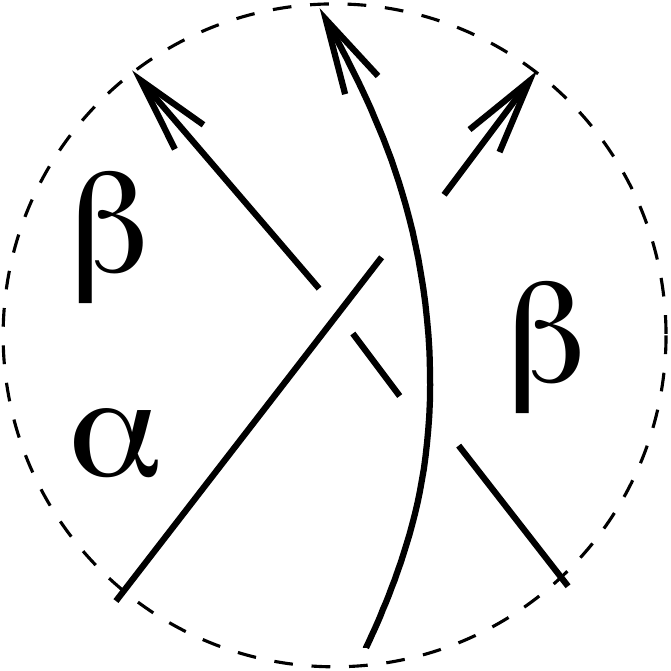}}\ \text{with canonical resolution} \ \raisebox{-13pt}{\includegraphics[height=0.45in]{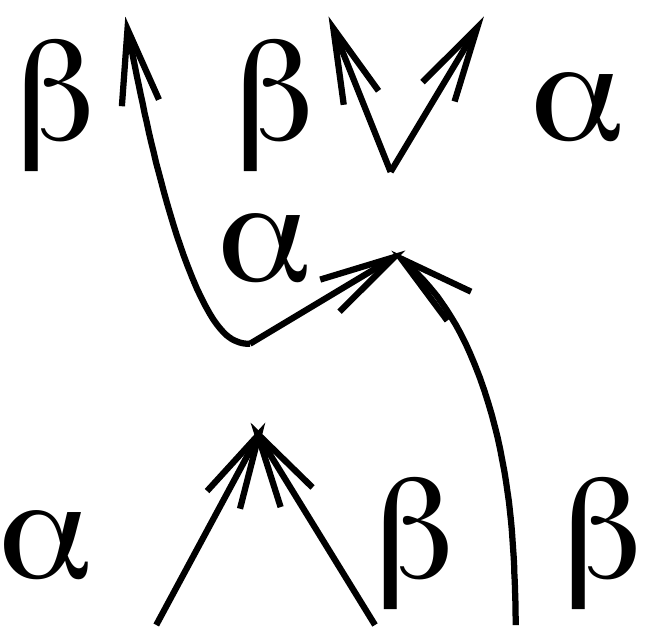}}\]

In these three cases, the canonical resolutions of $D_0$ and $D_1$ are the same and they have the same degree shift in the corresponding formal complexes $[D_0]$ and $[D_1].$ Therefore the map between the canonical resolutions of the two diagrams, constructed in the proof of the invariance theorem, is a degree zero automorphism of that (same) resolution, and thus it is $i^k \id,$ for some $k \in \{ 0, 1, 2, 3 \}.$ Note that we used here that the corresponding foam (as a map) cannot contain dots if it has no closed components, because a dot increases the degree of a map, and if it does contain closed components then those are replaced by a numerical value using the local relations $\ell.$  

\textit{Case 4}.  $D_0=\raisebox{-13pt}{\includegraphics[height=0.45in]{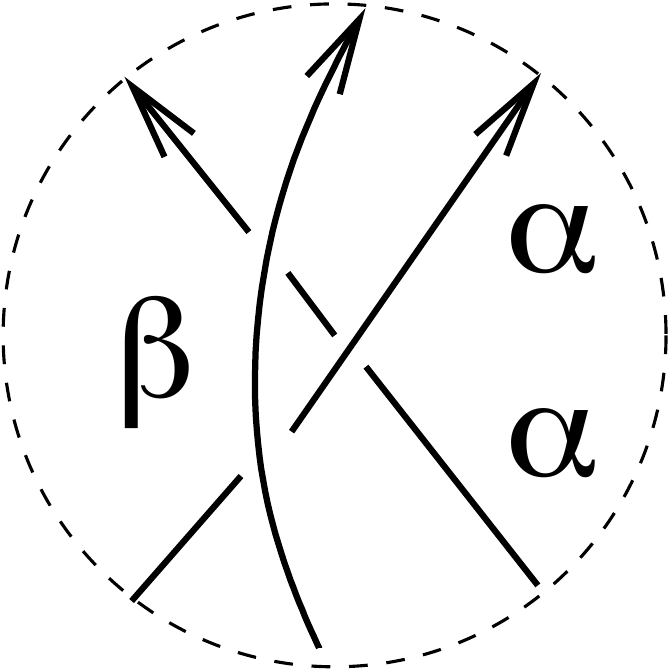}}\qquad
D_1=\raisebox{-13pt}{\includegraphics[height=0.45in]{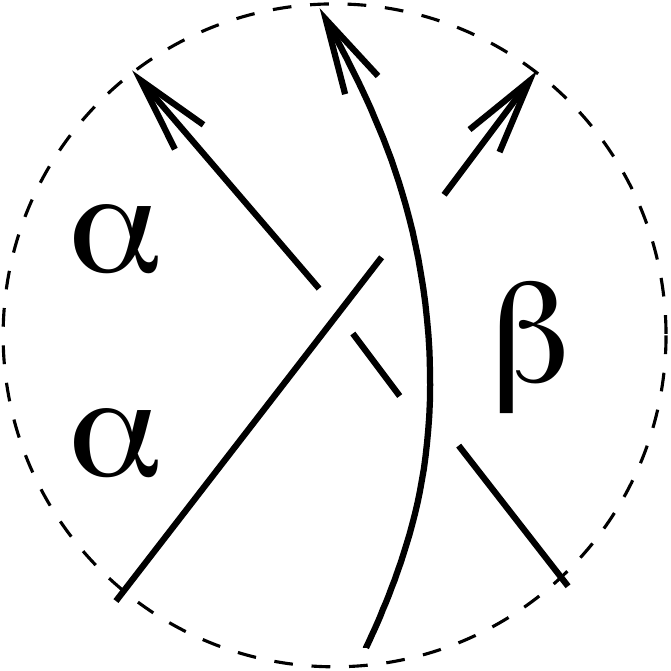}}$ whose corresponding canonical resolutions are $\raisebox{-13pt}{\includegraphics[height=0.45in]{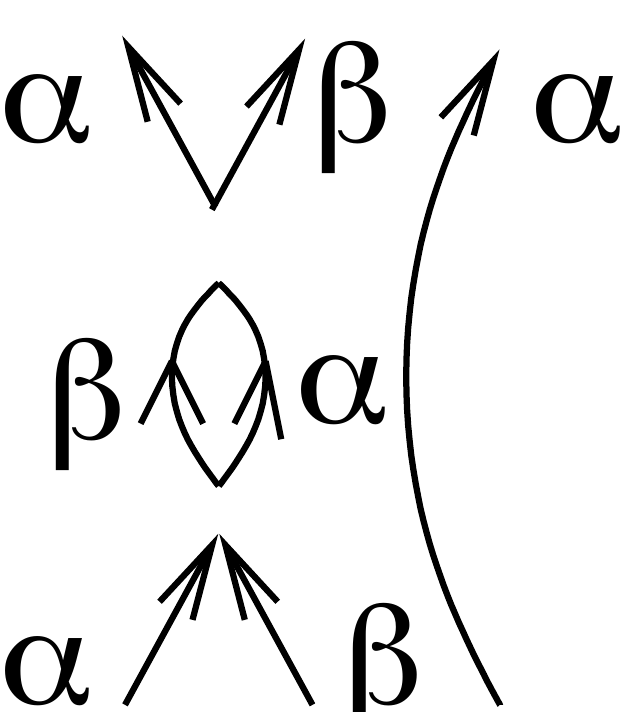}}$ and $\raisebox{-13pt}{\includegraphics[height=0.45in]{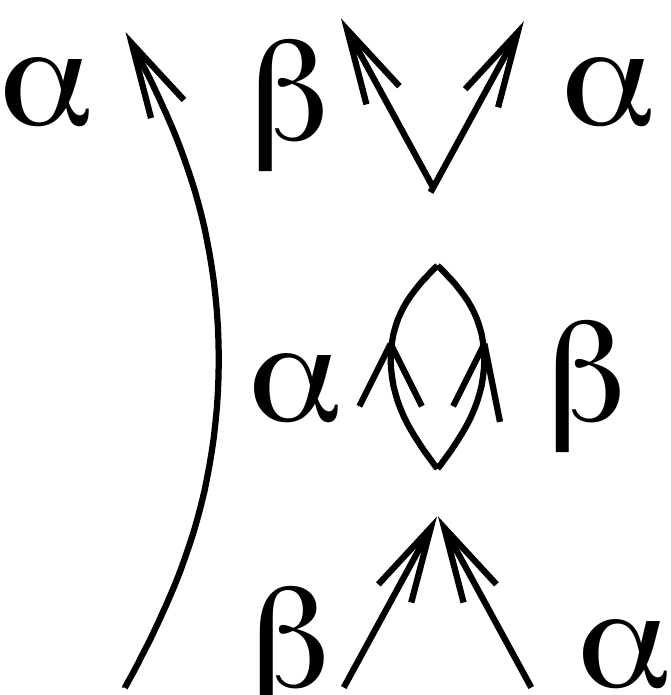}}.$ From the proof of the invariance theorem we know that the map between these resolutions is $i^k \left (\  \raisebox{-13pt}{\includegraphics[height=0.45in]{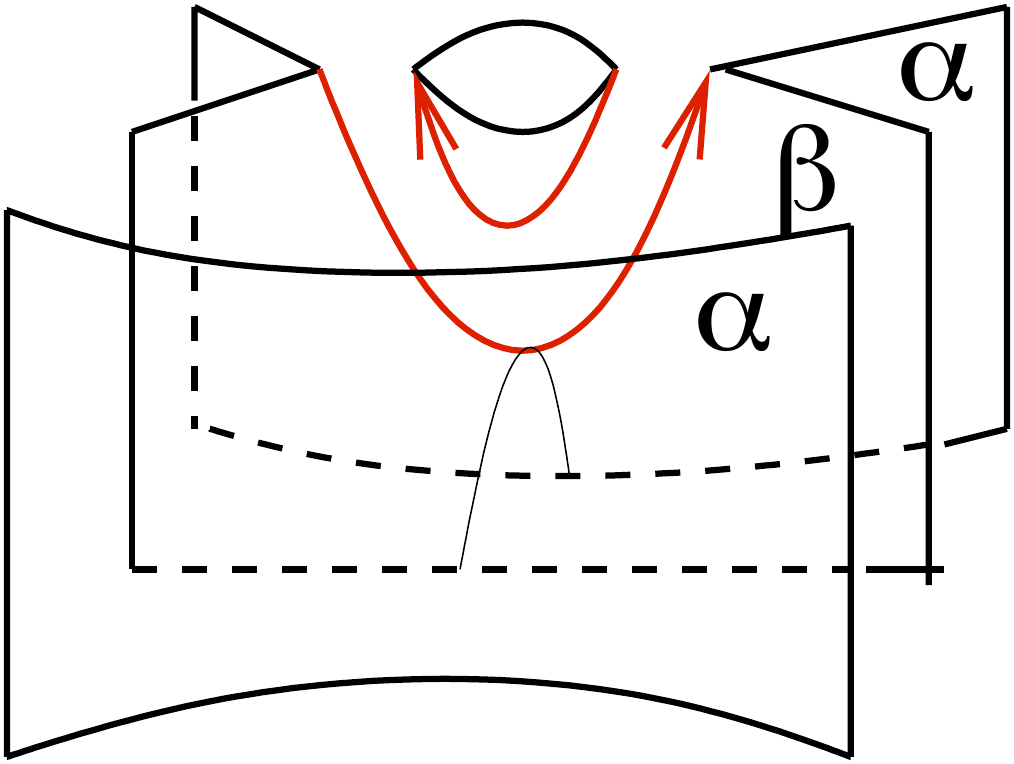}} \circ \raisebox{-13pt}{\includegraphics[height=0.45in]{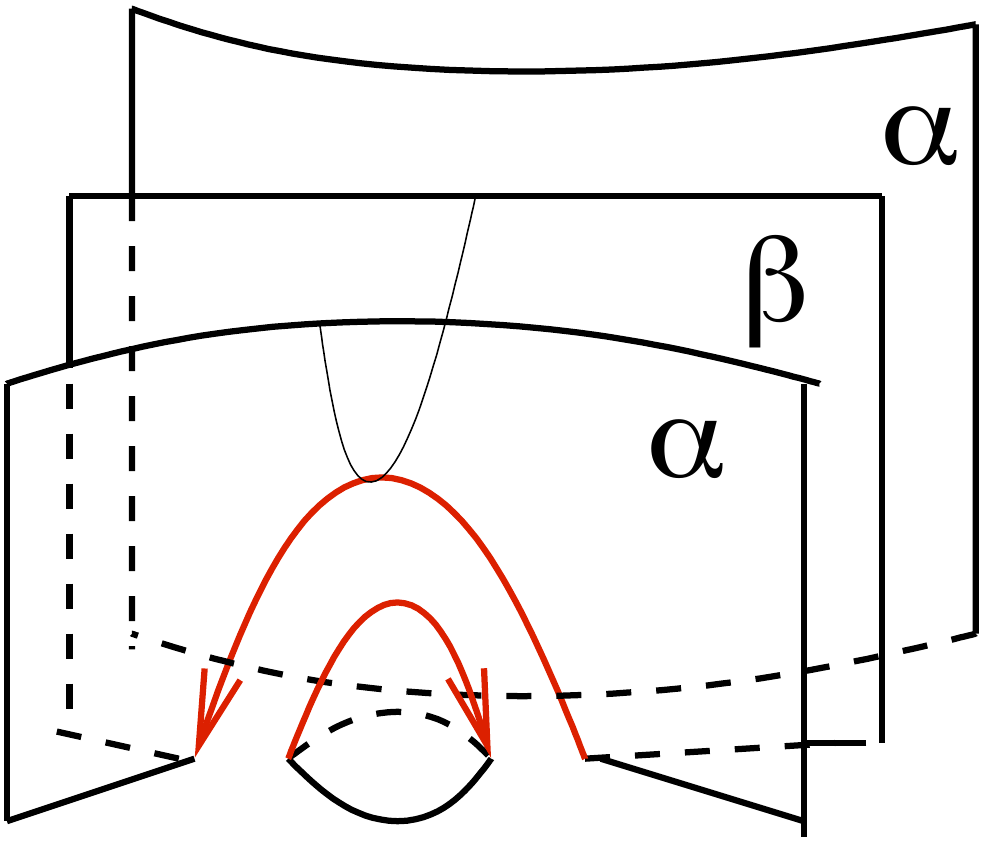}}\right ),$ for some $k \in \{0, 1, 2, 3 \}.$ 
Hence, in each of the four cases above, the induced map $\calL_C$ sends the canonical generator $h_{\phi_0}$ to a nonzero multiple of the compatible canonical generator $h_{\phi_1}.$ The other oriented versions for this move follow similarly. \end{proof}


\begin{proposition}\label{prop: Morse moves}
Let $C$ be an elementary link cobordism from $D$ to $D',$ and let $\calL_C \co \H(D)  \longrightarrow \H(D')$ be the homomorphism induced by $C.$

(1) If $C$ corresponds to a circle creation, then any canonical state $\phi$ of $D$ induces two canonical states $\phi'_1$ and $\phi'_2$ on $D'$ which agree with $\phi$ on all components of $D$ other than the new one, namely the disk bounded by the new circle, and $\calL_C(h_{\phi}) = \gamma_{\phi}(h_{\phi'_1} + h_{\phi'_2}), $ for some $\gamma_{\phi} \in \bbC^*.$  

(2)  If $C$ corresponds to a circle annihilation, then any canonical state $\phi$ of $D$ induces a canonical state $\phi'$ on $D'$ that agrees to $\phi$ on all components of $D,$ and $\calL_C(h_{\phi}) = \lambda_{\phi} h_{\phi'},$ for some $\lambda_{\phi} \in \bbC ^*.$

(3) If $C$ corresponds to a saddle move, then there are two situations to consider.
\begin{itemize}
\item If the values of a canonical state $\phi$ of $D$ on the two strands involved in the move are different, then $\calL_C(h_{\phi}) = 0.$
\item If the values of a canonical state $\phi$ of $D$ on the two strands involved in the move are equal, then $\phi$ induces a canonical state $\phi'$ of $D'$ which agrees with $\phi$ on the components which are unchanged and takes the common value on the changed component (or components), and $\calL_C(h_{\phi}) = \mu_{\phi} h_{\phi'},$ for some $\mu_{\phi} \in \bbC ^*.$
\end{itemize}
\end{proposition}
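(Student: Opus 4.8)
The plan is to reduce the statement, exactly as in the discussion preceding it, to a computation in the TQFT attached to $\calA$ written in the idempotent basis $\{z_1,z_2\}$. After applying the web-to-circle isomorphisms of \cite{CC1}, the canonical resolution $\Gamma_\phi$ of $D$ becomes a disjoint union of oriented, possibly nested circles, and the canonical generator $h_\phi$ becomes, up to a fixed power of $i$, the tensor product over all these circles of the idempotent ($z_1$ or $z_2$) assigned to each by the mod $2$ recipe; the same holds for $D'$. Viewed between these circle pictures, an elementary Morse cobordism $C$ is, after the same isomorphisms, the identity away from a single circle --- or a single pair of circles --- on which it acts as one of the structure maps $\iota$, $\epsilon$, $m$, $\Delta$, again up to a power of $i$. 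So in each case it is enough to read off the image of the relevant idempotents under that map and to observe that every scalar that appears is a nonzero complex number: the only scalars involved are powers of $i$ and the constants $\epsilon(z_1)=\tfrac{1}{\beta-\alpha}$, $\epsilon(z_2)=\tfrac{1}{\alpha-\beta}$, and the $\pm\tfrac{1}{\beta-\alpha}$ occurring in $\Delta$, all of which lie in $\bbC^*$ since $\alpha\neq\beta$.

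For part (1) a circle creation is the unit, so $\calL_C(h_\phi)$ equals a unit times $h_\phi\otimes\iota(1)=h_\phi\otimes(z_1+z_2)$; the newly created circle bounds a disk disjoint from the rest of the diagram and is therefore innermost, so no other circle's nesting number changes, and the two summands correspond to the canonical generators of the two resolutions of $D'$ obtained from $\Gamma_\phi$ by labeling the new circle $\alpha$ and $\beta$. This gives $\calL_C(h_\phi)=\gamma_\phi(h_{\phi'_1}+h_{\phi'_2})$ with $\gamma_\phi\in\bbC^*$. For part (2) a circle annihilation is the counit; the annihilated circle carries some $z_j$, and since $\epsilon(z_j)\in\bbC^*$ we get $\calL_C(h_\phi)=\lambda_\phi h_{\phi'}$ with $\lambda_\phi\in\bbC^*$, the annihilated circle again being innermost so that the remaining labels are untouched. (A circle eligible for a birth or a death bounds an embedded disk in the complement of the rest of the diagram and so may be taken innermost; by the Proposition above we are free to represent $C$ by such a convenient elementary cobordism.)

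For part (3), if the two strands involved in the saddle lie on the same component of $D$ then the saddle splits one circle $C$ into two, and $\calL_C(h_\phi)$ applies $\Delta$ to the idempotent $z_j$ on $C$; as $\Delta(z_j)=\pm\tfrac{1}{\beta-\alpha}\,z_j\otimes z_j\neq 0$, this yields $\calL_C(h_\phi)=\mu_\phi h_{\phi'}$ with $\mu_\phi\in\bbC^*$ and $\phi'$ the canonical state taking the common value on the two new circles. If the two strands lie on different components, the saddle merges two circles $C_1,C_2$ and $\calL_C(h_\phi)$ applies $m$ to the pair of idempotents carried by $C_1$ and $C_2$. The key point is that $C_1$ and $C_2$ carry the same idempotent if and only if the $\phi$-values of the two strands coincide: granting this, equal values give a common $z_j$ with $m(z_j\otimes z_j)=z_j$, hence $\calL_C(h_\phi)=\mu_\phi h_{\phi'}$, while different values give $z_1$ and $z_2$ with $m(z_1\otimes z_2)=0$, hence $\calL_C(h_\phi)=0$.

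The main obstacle is this last claim, together with the parallel need to control orientations and mod $2$ nesting numbers through a saddle. One must verify that the band realizing the saddle constrains the orientations that $\Gamma_\phi$ induces on $C_1$ and $C_2$ so that the two ``add $1$ if clockwise'' contributions differ in parity by exactly the same amount as the two ``add $1$ if labeled $\beta$'' contributions --- making equality of $\alpha/\beta$ labels equivalent to equality of $z_1/z_2$ labels --- and, likewise, that merging $C_1$ with $C_2$ (respectively splitting $C$) alters the nesting parities of the untouched circles by an even amount, so that the tensor factors not involved in the move are literally preserved. This is a finite local case check, in the same spirit as the well-definedness of the mod $2$ recipe; carrying it out in each of the merge/split and same-label/different-label configurations, together with the analogous and easier checks for a birth and a death, completes the proof.
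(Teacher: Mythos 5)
Your proposal follows the same route as the paper: pass to the idempotent basis $\{z_1,z_2\}$ via the web-to-circle isomorphisms and the mod $2$ recipe, and then read off the effect of the Morse move from the Frobenius structure maps $\iota,\epsilon,m,\Delta$. The paper's own proof is considerably terser — it simply cites $\iota(1)=z_1+z_2$, the nonvanishing of $\epsilon(z_j)$, and the multiplication/comultiplication formulas — and you are right that the real content lies in the compatibility of the mod $2$ nesting/orientation/label recipe with the Morse moves, which the paper leaves implicit. Flagging that explicitly is a genuine improvement in rigor.

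Two points to tighten. First, your reduction ``two strands on the same component of $D$ $\Rightarrow$ the saddle splits a circle of $\Gamma_\phi$, and on different components $\Rightarrow$ it merges two'' is not correct: the merge/split dichotomy is determined by the topology of the canonical resolution $\Gamma_\phi$ near the saddle, not by the component structure of $D$. For instance, with a knot diagram and the all-$\alpha$ state, the saddle acts on the collection of Seifert circles and can perfectly well be either a merge or a split. The correct bookkeeping (which your final paragraph does gesture at) has to treat merge/split and same/different $\phi$-label as independent variables, and then check in each of the four combinations that the $z_j$-labels produced by the mod $2$ recipe give the claimed answer. Second, there is a further subtlety neither you nor the paper addresses: when the saddle joins an $\alpha$-arc to a $\beta$-arc of the same web, the web-to-circle isomorphism (which deletes, say, the $\beta$-arcs) conjugates the saddle foam into something that is not obviously a standard pair of pants between circles; one must check that what it becomes in $\textbf{Foams}_{/\ell}$ really does act as a nonzero multiple of $m$ or $\Delta$ (or as zero) as required. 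None of this is hard, but it is precisely the ``finite local case check'' you defer, and it is where the verification actually lives.
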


\begin{proof}
In the first case, $h_{\phi} \otimes z_1$ and $h_{\phi} \otimes z_2$ are the induced canonical generators corresponding to the two possible states of $D'.$ Since $\iota(1) = z_1 + z_2$ we have that $\calL_C(h_{\phi}) = h_{\phi} \otimes (z_1 + z_2)$ and the statement follows. In the second case the statement is implied by  $\epsilon (z_1) = \frac{1}{\beta - \alpha}, \, \epsilon (z_2) = \frac{1}{\alpha - \beta},$ and for the third case the statement follows from the rules for multiplication and comultiplication maps given with respect to the basis $\{z_1, z_2\}$ of $\calA.$ 
\end{proof}

Let $C$ be a link cobordism from link $D$  to link $D'$ decomposed into a union of elementary cobordisms $C = C_1 \cup C_2 \cup \dots \cup C_l.$ Define $\overline{C}_j = C_1 \cup \dots \cup C_j,$ the cobordism from $D_0 = D$ to $D_j$, where $D_{j-1}$ and $D_j$ are the initial and terminal ends of $C_j.$ Define $\overline{\calL}_j = \calL_{\overline{C}_j } = \calL_{C_j} \circ \dots \circ \calL_{C_1}.$ A component of $D_j$ is called evolved if it is a boundary component of a component of $\overline{C}_j$ that has a boundary in $D_0.$  Let $\phi_0$ be a state of $D_0.$ A state $\phi_j$ of $D_j$ is called compatible with $\phi_0$ if it agrees with $\phi_0$ on all evolved components.  

\begin{proposition}
Let $C $ be a link cobordism as above, such that $C$ has no closed components. Then
$ \overline{\calL}_j(h_{\phi_0}) = \sum_{\phi_j} \lambda_{\phi_j}h_{\phi_j},$
where $\{\phi_j\}$ runs through all states of $D_j$ compatible with $\phi_0,$ and each scalar $\lambda_{\phi_j}$ is nonzero.  
\end{proposition}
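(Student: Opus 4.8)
The plan is to prove this by induction on $j$, the number of elementary cobordisms composing $\overline{C}_j$. The base case $j=0$ is trivial: $\overline{\calL}_0 = \id$ and the only state of $D_0$ compatible with $\phi_0$ is $\phi_0$ itself, so $\overline{\calL}_0(h_{\phi_0}) = h_{\phi_0}$, which has the desired form with coefficient $1 \in \bbC^*$.

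For the inductive step, assume $\overline{\calL}_{j-1}(h_{\phi_0}) = \sum_{\phi_{j-1}} \lambda_{\phi_{j-1}} h_{\phi_{j-1}}$ with all $\lambda_{\phi_{j-1}} \in \bbC^*$, the sum running over states of $D_{j-1}$ compatible with $\phi_0$. Applying $\calL_{C_j}$ and using linearity, I would write $\overline{\calL}_j(h_{\phi_0}) = \sum_{\phi_{j-1}} \lambda_{\phi_{j-1}} \calL_{C_j}(h_{\phi_{j-1}})$ and then expand each term using Proposition~\ref{prop: Reidemeister moves} (when $C_j$ is a Reidemeister cobordism) or Proposition~\ref{prop: Morse moves} (when $C_j$ is a Morse cobordism). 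In each of these cases $\calL_{C_j}(h_{\phi_{j-1}})$ is either zero (only for a saddle joining strands with different labels) or a $\bbC^*$-linear combination of canonical generators $h_{\phi_j}$ for states $\phi_j$ of $D_j$ that restrict to $\phi_{j-1}$ on the unchanged components. The key bookkeeping point is that every such $\phi_j$ is compatible with $\phi_0$: a component of $D_j$ is evolved precisely when it is a boundary of a component of $\overline{C}_j$ touching $D_0$, and one checks that for each elementary move, an evolved component of $D_j$ is either literally an unchanged component of $D_{j-1}$ that was already evolved, or (in the saddle and circle-creation cases) a component obtained from evolved components of $D_{j-1}$, on which the new state value is forced to equal the common value of $\phi_{j-1}$ on those components — hence equal to $\phi_0$'s value by the inductive hypothesis. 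Conversely any $\phi_j$ compatible with $\phi_0$ arises from some compatible $\phi_{j-1}$.

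The one genuine subtlety — and the step I expect to be the main obstacle — is cancellation: a priori, when we sum the expansions of $\calL_{C_j}(h_{\phi_{j-1}})$ over the various $\phi_{j-1}$, two different terms could contribute to the same $h_{\phi_j}$ with coefficients that sum to zero. I would rule this out by observing that distinct compatible states $\phi_{j-1}$ of $D_{j-1}$ produce canonical generators with \emph{disjoint} supports of target generators: if $\phi_{j-1} \neq \phi_{j-1}'$ then they differ on some component, and since all moves other than the saddle and circle-creation are bijective on components (so a resulting $\phi_j$ remembers $\phi_{j-1}$), and since in the saddle/creation cases the resulting $\phi_j$ still determines $\phi_{j-1}$ (the merged or newly created component value, together with the unchanged components, recovers $\phi_{j-1}$), no $h_{\phi_j}$ receives contributions from two different $\phi_{j-1}$. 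Therefore each coefficient $\lambda_{\phi_j}$ is a single product $\lambda_{\phi_{j-1}} \cdot (\text{the scalar from Prop.~\ref{prop: Reidemeister moves} or~\ref{prop: Morse moves}})$, a product of nonzero complex numbers, hence nonzero. Here the hypothesis that $C$ has no closed components is what guarantees we never invoke the forbidden operations (a closed component would force an $\epsilon\circ\iota$ evaluation contributing a possibly-zero scalar, or a circle-annihilation of a non-evolved component); with no closed components every elementary piece $C_j$ falls under one of the controlled cases above. Assembling these observations completes the induction.
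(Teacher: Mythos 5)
Your overall strategy---induction on $j$ using Propositions~\ref{prop: Reidemeister moves} and~\ref{prop: Morse moves}---is exactly what the paper indicates (the paper itself gives no details, only deferring to Wu's Proposition~5.10), and your careful identification of the two potential pitfalls (tracking compatibility and ruling out cancellation) is the right thing to worry about.

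However, there is a gap in the way you dispose of the circle-annihilation case. You assert that the hypothesis ``$C$ has no closed components'' prevents you from ever annihilating a non-evolved circle of $D_{j-1}$, and you lean on this to conclude that distinct $\phi_{j-1}$ always map to disjoint sets of target states $\phi_j$. That assertion is not correct. A circle of $D_{j-1}$ can lie on a component $S$ of $\overline{C}_{j-1}$ that does not reach $D_0$ (so the circle is non-evolved) but does have other boundary circles in $D_{j-1}$; capping that circle off then leaves $S \cup (\text{cap})$ still open, and the corresponding component of the full cobordism $C$ can later attach to something from $D_0$ or simply persist to $D_l$. For instance, starting from $D_0 = K$, create a circle, split it by a saddle into two circles $\gamma_1, \gamma_2$, and annihilate $\gamma_1$: at the annihilation step $\gamma_1$ is non-evolved, yet no closed component of $C$ need be produced so long as $\gamma_2$ eventually joins $K$ or survives. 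Since annihilation is exactly the elementary move for which $\phi_j$ does \emph{not} determine $\phi_{j-1}$, your ``disjoint supports'' argument does not cover it, and you have not actually shown that the two pre-images (differing in the label of the annihilated circle) cannot contribute equal and opposite multiples of $\epsilon(z_1)$ and $\epsilon(z_2)$ to the same $h_{\phi_j}$. To close the gap one must either argue, by tracking the sign/magnitude of the $\lambda_{\phi_{j-1}}$ through the history of the annihilated circle, that such a pair of coefficients never cancels, or (as Wu does) work with a slightly stronger notion of compatibility that forces boundary circles of the same cobordism component to carry consistent labels; as written your reduction to ``controlled cases'' does not accomplish either.
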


\begin{proof}
The proposition is proved by induction on $j$ and uses the results in Propositions~\ref{prop: Reidemeister moves} and~\ref{prop: Morse moves}. The proof is similar in spirit to that of~\cite[Proposition 5.10]{W}, thus we omit the details.
\end{proof}

\begin{corollary}
If $C$ is a connected cobordisms between knots $K_0$ and $K_1,$ then $\calL_C \co \H(K_0) \to \H(K_1)$ is an isomorphism.
\end{corollary}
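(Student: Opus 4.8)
The plan is to combine the preceding proposition (on the behavior of $\overline{\calL}_j$ with respect to canonical generators) with the enumeration of canonical generators of $\H$ for a knot supplied by Theorem~1. First I would observe that a connected cobordism $C$ between two knots has no closed components: $C$ is a single connected surface and it meets $\partial C = K_0 \sqcup K_1 \neq \emptyset$, so it is not closed. Hence the hypothesis of the preceding proposition is met, and writing $C = C_1 \cup \dots \cup C_l$ as a union of elementary cobordisms we have $\calL_C = \overline{\calL}_l$.

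Next, I would recall that a knot $K$ has exactly two canonical states, namely the labelings of its single component by $\alpha$ and by $\beta$; write $h_{\alpha}^K$ and $h_{\beta}^K$ for the two corresponding canonical generators of $\H(K)$. By Theorem~1 we have $\dim_{\bbC}\H(K) = 2$, and since $\{h_{\alpha}^K, h_{\beta}^K\}$ spans $\H(K)$ it is a basis.

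Now apply the preceding proposition with $\phi_0$ taken to be the $\alpha$-labeling of $K_0$. The terminal diagram $D_l = K_1$ has a single component, and since $C$ is connected and meets $K_0$, that component is an evolved component of $\overline{C}_l = C$; consequently the only state of $K_1$ compatible with $\phi_0$ is the $\alpha$-labeling. The proposition then gives $\calL_C(h_{\alpha}^{K_0}) = \lambda_{\alpha}\, h_{\alpha}^{K_1}$ with $\lambda_{\alpha} \in \bbC^*$, and the same argument with the $\beta$-labeling gives $\calL_C(h_{\beta}^{K_0}) = \lambda_{\beta}\, h_{\beta}^{K_1}$ with $\lambda_{\beta} \in \bbC^*$. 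Hence, in the bases $\{h_{\alpha}, h_{\beta}\}$, the matrix of $\calL_C$ is $\mathrm{diag}(\lambda_{\alpha}, \lambda_{\beta})$, which is invertible, so $\calL_C$ is an isomorphism.

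The only point requiring care --- and the closest thing to an obstacle --- is verifying that the compatibility condition in the preceding proposition really does collapse the sum $\sum_{\phi_l}\lambda_{\phi_l}h_{\phi_l}$ to a single term: one must check that ``compatible with $\phi_0$'' forces the lone component of $K_1$ to carry the same $\alpha/\beta$ label as the component of $K_0$, which is exactly the content of its being an evolved component of the connected cobordism $C$. Once this bookkeeping is in place, the argument is purely formal, relying only on Theorem~1 and the preceding proposition.
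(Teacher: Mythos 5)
Your proof is correct and is essentially the argument the paper intends (the paper states the corollary without proof, leaving it to follow from the preceding proposition and Theorem~1). You correctly observe that a connected cobordism between knots has no closed components, that a knot has exactly two canonical states whose generators form a basis of the two-dimensional space $\H(K)$, that the single component of $K_1$ is evolved because $C$ is connected with boundary meeting $K_0$, and hence that the compatibility condition forces the image of each canonical generator to be a single nonzero multiple of the corresponding canonical generator of $K_1$, giving an invertible diagonal matrix.
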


\textbf{Acknowledgements.} The author gratefully acknowledges NSF partial support via grant DMS - 0906401. She would also like to thank the referee for his or her helpful comments.

\end{document}